\newcommand{\sol}{\mathrm{Sol}} 
\newcommand{\isom}{\operatorname{Isom}} 
\newcommand{\glz}{\operatorname{Gl_2(\z)}} 
\newcommand{\tors}{\mathrm{Tors}} 
\newcommand{\ind}{\mathrm{ind}} 
\newcommand{\sn}{\operatorname{sn}}
\newcommand{\p}[0]{{\mathbb P}}
\newcommand{\z}[0]{{\mathbb Z}}
\newcommand{\n}[0]{{\mathbb N}}
\newcommand{\q}[0]{{\mathbb Q}}
\let \cedilla=\c  
\renewcommand{\c}[0]{{\mathbb C}}
\renewcommand{\r}{\mathbb{R}}
\newcommand{\R}{\mathbb{R}}
\newcommand{\norm}[1]{\left\| #1 \right\|} 
\newcommand{\st}{~|~} 
\newtheorem{thm}
{Theorem}%
\numberwithin{thm}{section}%
\newtheorem{prop}
{Proposition}
\numberwithin{prop}{section}%
\newtheorem{lem}
{Lemma}
\numberwithin{lem}{section}%
\newtheorem{cor}
{Corollary}
\numberwithin{cor}{section}%
\numberwithin{conj}{section}%
\theoremstyle{remark}
\newtheorem{rem}
{Remark}
\numberwithin{rem}{section}%
\theoremstyle{definition}
\newtheorem{dfn}
{Definition}
\numberwithin{dfn}{section}%
\newtheorem{nota}
{Notation}
\numberwithin{nota}{section}%
\begin{document}

\title{Do uniruled six-manifolds contain $\sol$ Lagrangian submanifolds?}
\author{Fr\'ed\'eric Mangolte \and Jean-Yves Welschinger} 

\date{January 16, 2010}

\maketitle

\begin{abstract}
We prove using symplectic field theory that if the suspension of a hyperbolic diffeomorphism of the two-torus Lagrangian embeds in a closed uniruled symplectic six-manifold, then its image contains the boundary of a symplectic disc with vanishing Maslov index. This prevents such a Lagrangian submanifold to be monotone, for instance the real locus of a smooth real Fano manifold. It also prevents any $\sol$ manifold to be in the real locus of an orientable real  Del Pezzo fibration over a curve, confirming an expectation of J.~Koll\'ar. Finally, it constraints Hamiltonian diffeomorphisms of uniruled symplectic four-manifolds. 
\end{abstract}
\section*{Introduction}

 Complex projective uniruled manifolds play a special r\^ole in algebraic geometry, these are the manifolds of special type in the sense of Mori. What can be the topology of the real locus of such a manifold when defined over $\r$? This natural question has a symplectic counterpart. What can be the topology of  Lagrangian submanifolds of such uniruled manifolds? Uniruled manifolds of dimension two are rational or ruled surfaces. Comessatti proved in \cite{com} that no orientable component of the real locus of such a surface can have negative Euler characteristic. Actually, closed symplectic four-manifolds with $b_2^+ = 1$ cannot contain any orientable Lagrangian submanifold with negative Euler characteristic. By the way, it is proved in \cite{wels} that even the unit cotangent bundle of an orientable hyperbolic surface does not embed as a hypersurface of contact type of a uniruled symplectic four-manifold. 

In complex dimension three, a great piece of work was done by Koll\'ar \cite{koI,koII,koIII,koIV} in order to carry out Mori's minimal model
program (MMP) over $\r$ for uniruled manifolds. Roughly, the upshot \cite{ko-nash} is that up to connected sums with $\r\p^3$ or $S^2\times S^1$ and modulo a finite number of closed three-manifolds, the orientable real uniruled three-manifolds are Seifert fibered spaces or connected sums of lens spaces. This result however depends on two expectations. The first one is that closed hyperbolic manifolds cannot appear. The second one is that closed $\sol$ manifolds cannot appear. Quickly, this first expectation was confirmed by Viterbo and Eliashberg (\cite{sftVit,KhBourbaki,EGH}). Namely, a closed uniruled symplectic manifold of dimension greater than four cannot contain a closed Lagrangian submanifold with negative curvature. The proof of Eliashberg uses symplectic field theory (SFT), which appears to be a very powerful tool to tackle this question.

The aim of this paper is to prove the second one, using  SFT as well, at least as far as the precise expectation of Koll\'ar is concerned. We unfortunately could not prove such a general result as Viterbo-Eliashbeg's one, but proved the following (see Theorem~\ref{thm.main}).
Let $L \subset (X,\omega)$ be a closed Lagrangian submanifold homeomorphic to the suspension of a hyperbolic diffeomorphism of the two-torus, where $(X,\omega)$ is a closed symplectic uniruled six-manifold. Then $X$ contains a symplectic disc of vanishing Maslov index and with boundary on $L$, non-trivial in $H_1(L;\q)$. This prevents $L$ from being monotone, for instance the real locus of a smooth Fano manifold. It also actually prevents any $\sol$ manifold to be in the real locus of a projective three-manifold fibered over a curve with rational fibers, at least provided this real locus be orientable, see Corollary~\ref{cor.dp} and the discussion which follows. This was the actual problem raised by Koll\'ar in \cite[Remark~1.4]{koIV}. Finally, it implies that a Hamiltonian diffeomorphism
of a uniruled symplectic four-manifold which preserves some Lagrangian torus cannot restrict to a hyperbolic diffeomorphism of the torus, 
see Corollary~\ref{cor.torus}.
Our approach, which uses SFT, requires some understanding of the geodesic flow of $\sol$ manifolds, namely the Morse indices of their closed geodesics. The first part of this paper is thus devoted to a study of $\sol$ manifolds and their closed geodesics. The second part is devoted to the proof of our main result. Note that the converse problem remains puzzling. What is the simplest real projective manifold which contains a hyperbolic component? What is the simplest real projective manifold which contains a $\sol$ component? 
Recall that every closed orientable three-manifold modeled on any of the six remaining three-dimensional geometries embeds in the real locus of a projective uniruled manifold \cite{hm1,hm2}. Note also that
in the case of the projective space, the absence of orientable Sol Lagrangian submanifolds follows from Theorem 14.1 of \cite{Fuk}. 
Moreover, in this paper Kenji Fukaya remarks that his methods may extend to uniruled manifolds as well. \\

\textbf{Acknowledgments:}
This work was supported by the French Agence nationale de la recherche, reference ANR-08-BLAN-0291-02. The second author acknowledges Gabriel Paternain for fruitful discussions about $\sol$ manifolds.

\section{$\sol$-geometry}

\subsection{The group $\sol$}

The group $\r$ of real numbers acts on the abelian group $\r^2$ by
$$
\begin{array}{clc}
 \r\times\r^2 &\to &\r^2 \\
\left(z,(x,y)\right)  &\mapsto &\left(e^z x,e^{-z} y \right) \;.    
\end{array}
$$
The induced semidirect product is denoted by $\sol$, so that the group law of $\sol$ is given by
$$
\begin{array}{clc}
 \sol \times \sol &\to &\sol \\
\left((\alpha,\beta,\lambda),(x,y,z)\right)  &\mapsto &\left(e^\lambda x + \alpha,e^{-\lambda} y +\beta,z+\lambda\right) \;.    
\end{array}
$$
Let $K \cong \r^2$ denote the kernel of the surjective morphism $(x,y,z)\in \sol\mapsto z\in\r$, and let $e_1,e_2,e_3$ be the elements of $\sol$ of coordinates $(1,0,0)$, $(0,1,0)$, and $(0,0,1)$ respectively. 
The group $K$ coincides with the derived subgroup of $\sol$, as shows the relation
$$
[e_3,xe_1+ye_2]=(e-1)xe_1+(e^{-1}-1)ye_2\;.
$$

Denote by
$$
X:=e^z\frac{\partial}{\partial x}\quad,\quad Y:=e^{-z}\frac{\partial}{\partial y}\quad,\quad Z:=\frac{\partial}{\partial z}
$$
the left-invariant vector fields of $\sol$ which coincide with $\frac{\partial}{\partial x},\frac{\partial}{\partial y},\frac{\partial}{\partial z}$ at the origin. 
We provide $\sol$ with  the Riemannian metric and the orientation making $(X,Y,Z)$ direct orthonormal.
The space $\sol$ thus obtained is homogeneous, its isotropy group is isomorphic to the diedral group $D_4$ generated by the isometries 
$$
\rho\colon (x,y,z)\in \sol\mapsto (y,-x,-z)\in \sol
$$
and
$$
r_Y \colon(x,y,z)\in \sol\mapsto (-x,y,z)\in \sol
$$
see \cite[Lemmas 3.1 and 3.2]{tro}.
In particular, the isometries of $\sol$ preserve the horizontal foliation $\mathcal{F}:=\{\mathrm dz=0\}$, and act by isometries on its space of leaves $\r$. We denote by $P \colon \isom(\sol)\to \isom(\r)$ the surjective morphism thus defined.

\subsection{Geodesic flow on $\sol$}

Geodesics of $\sol$ have been determined in \cite{tro}, and divided into three types $A$, $B$, and $C$. 
Geodesics of type $A$ are the lines directed by the vector field $f_1:=\frac{X-Y}{\sqrt{2}}$ or $f_2:=\frac{X+Y}{\sqrt{2}}$; they are contained in the foliation $\mathcal{F}$ (whose leaves are minimal surfaces). 
Geodesics of type $B$ are geodesics contained in the totally geodesic hyperbolic foliations $\mathcal{H'}:=\{\mathrm dy=0\}$ or $\mathcal{H''}:=\{\mathrm dx=0\}$. 
Among  geodesics of type $B$, only those directed by the vector field $Z$ will play a r\^ole in this paper. 
Geodesics of type $C$ are contained in cylinders whose axes are geodesics of type~$A$. Along such a geodesic, the $z$-coordinate 
-corresponding to the axis directed by $e_3$- is then bounded between two values, these 
geodesics of type $C$ will not play a significant r\^ole in the sequel. 
The aim of the present paragraph is to calculate the linearization of the geodesic flow along a geodesic of type $A$ or~$B$.

Denote by $S^*\sol:=\left\{ (q,p)\in T^*\sol \st \norm{p}=1\right\}$ the unitary cotangent bundle of $\sol$ where the norm $\norm{\cdot}$ is the one induced by the fixed Riemannian metric on $\sol$.
Denote by $\xi^*$ the contact distribution of $S^*\sol$, it is  the kernel of the restriction of the Liouville form $p\,\mathrm dq$. 
Likewise, we denote by $S\sol$ the unitary tangent bundle of $\sol$, and by $\xi$ the distribution induced by the identification $\flat \colon S\sol \stackrel{\sim}{\longrightarrow}  S^*\sol$ given by the metric. 
The identification is defined in the basis $(e_1,e_2,e_3)$ by
$$
\begin{array}{lclc}
\flat \colon &T\sol &\longrightarrow& T^*\sol \\
&\left(x,y,z,\dot x,\dot y,\dot z\right)  &\longmapsto &\left(x,y,z,e^{-2z}\dot x,e^{2z}\dot y,\dot z\right) \;.    
\end{array}
$$
The Levi-Civita connection gives an orthogonal direct sum decomposition 
$$
\xi = \xi_h \oplus \xi_v\;,
$$
where $\xi_v$ is the space of elements of $\xi$ which are tangent to the fibers of $T\sol$, while $\xi_h$ is the orthogonal plane to $\xi_v$ given by the connection. The planes $\xi_h$ and $\xi_v$ are canonically isomorphic; if $v$ is a tangent vector to $\sol$ orthogonal to a geodesic, we will denote by $v$ its lift to $\xi_h$, and by $\dot v$ its lift to $\xi_v$, in order to distinguish them.

\subsubsection{Linearized flow along a geodesic of type $A$}

There are two families of geodesics of type $A$, those given by $f_2=\frac{X+Y}{\sqrt{2}}$, and those given by $f_1=\frac{X-Y}{\sqrt{2}}$. 
Since these families are exchanged by the isometry $r_X = \rho^2r_Y$, we restrict our study to the first family.

Let then $\gamma(t):=\gamma_0 + t \left( \frac{X+Y}{\sqrt{2}}\right)_{\mathbf{|}\gamma_0}$ be a geodesic of type $A$ and $\gamma'(t)=\frac{\mathrm d}{\mathrm d t}\gamma(t) = \left( \frac{X+Y}{\sqrt{2}}\right)_{\mathbf{|}\gamma(t)}$. 
The orthogonal plane to $\gamma'(t)$ in $T_{\gamma(t)}\sol$ is generated by $\left(X-Y\right)_{\mathbf{|}\gamma(t)}$ and $Z_{\mathbf{|}\gamma(t)}$. 
Hence $\xi_{\mathbf{|}\left(\gamma(t),\gamma'(t)\right)} = \langle X-Y,Z,\dot X - \dot Y, \dot Z\rangle$. 
Let 
$$
h_1:=\frac{X-Y}{\sqrt{2}}\quad,\quad h_2:= Z + \dot h_1\quad,\quad h_3:= Z + 2\dot h_1 \quad,\quad h_4:=h_1 - \dot Z\;.
$$

\begin{lem}\label{lem.sp}
Let $\gamma \colon t \in \r \longmapsto \gamma_0 + t \left( \frac{X+Y}{\sqrt{2}}\right)_{\mathbf{|}\gamma_0}\in \sol$ be a geodesic of type $A$, where $\gamma_0 \in \sol$. Then the canonical symplectic form $\flat^*(\mathrm d p \wedge \mathrm d q)$ on the contact distribution $\xi$ along $\gamma$ is given by 
$$
\mathrm d h_1\wedge \mathrm d h_2 + \mathrm d h_3\wedge \mathrm d h_4\;.
$$
\end{lem}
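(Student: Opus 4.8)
The plan is to reduce the identity to a pointwise linear-algebra computation over each point $\left(\gamma(t),\gamma'(t)\right)\in S\sol$, carried out in the natural coordinates $(x,y,z,\dot x,\dot y,\dot z)$ on $T\sol$; since $z$ is constant along a type-$A$ geodesic the algebra is the same at every $t$. As $(X,Y,Z)$ is orthonormal with $X=e^{z}\partial_x$, $Y=e^{-z}\partial_y$, $Z=\partial_z$, the metric is $g=e^{-2z}\,\mathrm dx^2+e^{2z}\,\mathrm dy^2+\mathrm dz^2$, and the explicit form of $\flat$ recalled above gives $\flat^*(p\,\mathrm dq)=e^{-2z}\dot x\,\mathrm dx+e^{2z}\dot y\,\mathrm dy+\dot z\,\mathrm dz$. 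Differentiating,
$$
\flat^*(\mathrm dp\wedge \mathrm dq)=\mathrm d(e^{-2z}\dot x)\wedge \mathrm dx+\mathrm d(e^{2z}\dot y)\wedge \mathrm dy+\mathrm d\dot z\wedge \mathrm dz .
$$
The point to isolate is that differentiating the conformal factors $e^{\pm2z}$ contributes extra terms proportional to $\dot x\,\mathrm dz\wedge\mathrm dx$ and $\dot y\,\mathrm dz\wedge\mathrm dy$, which couple the $z$-direction to the $x,y$-directions; these are exactly the terms that keep the form from being diagonal in the obvious frame.

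Next I would specialise to the geodesic. Along a type-$A$ geodesic the coordinate $z\equiv z_0$ is constant and $\gamma'=\tfrac1{\sqrt2}(e^{z_0}\partial_x+e^{-z_0}\partial_y)$, so that $(\dot x,\dot y,\dot z)=\bigl(\tfrac{e^{z_0}}{\sqrt2},\tfrac{e^{-z_0}}{\sqrt2},0\bigr)$ at the relevant point. I would then express the four generators of $\xi$, namely the lifts of $X-Y$ and $Z$ to $\xi_h$ together with $\dot X-\dot Y$ and $\dot Z$ to $\xi_v$, as explicit coordinate vectors and evaluate the $4\times4$ Gram matrix of $\flat^*(\mathrm dp\wedge\mathrm dq)$ on this frame. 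Two of the pairings are the expected ones, proportional to the squared norms $\norm{X-Y}^2=2$ and $\norm{Z}^2=1$; the decisive feature is that the coupling terms above also produce a non-zero pairing between the two generators $X-Y$ and $Z$, so the Gram matrix is genuinely non-diagonal.

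Finally I would pass to the frame $(h_1,h_2,h_3,h_4)$ and check that the Gram matrix becomes the standard symplectic one, i.e.\ that $(h_1,h_2)$ and $(h_3,h_4)$ are hyperbolic pairs and all remaining pairings vanish; this is equivalent to the asserted equality $\flat^*(\mathrm dp\wedge\mathrm dq)=\mathrm dh_1\wedge \mathrm dh_2+\mathrm dh_3\wedge \mathrm dh_4$ in the dual coframe. The combinations $h_2=Z+\dot h_1$, $h_3=Z+2\dot h_1$ and $h_4=h_1-\dot Z$ are precisely tailored to absorb both the factor $2$ in the $(X-Y,\dot X-\dot Y)$ pairing and the anomalous coupling between $X-Y$ and $Z$, so the verification reduces to substituting and collecting terms. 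The only real difficulty is the bookkeeping: keeping the horizontal and vertical lifts and the conformal factors straight and watching signs so that the unwanted off-diagonal entries cancel exactly. A coordinate-free alternative would be to identify $\flat^*(\mathrm dp\wedge\mathrm dq)\big|_\xi$ with the Wronskian pairing $(J_1,J_2)\mapsto\langle J_1,\nabla_tJ_2\rangle-\langle\nabla_tJ_1,J_2\rangle$ on normal Jacobi fields along $\gamma$ and to read off the answer from the Jacobi equation of $\sol$, but the direct computation seems shortest.
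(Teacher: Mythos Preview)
Your proposal is correct and follows essentially the same route as the paper: compute $\flat^*(p\,\mathrm dq)=e^{-2z}\dot x\,\mathrm dx+e^{2z}\dot y\,\mathrm dy+\dot z\,\mathrm dz$, differentiate to pick up the coupling terms $-2e^{-2z}\dot x\,\mathrm dz\wedge\mathrm dx+2e^{2z}\dot y\,\mathrm dz\wedge\mathrm dy$, restrict along the geodesic, and verify the basis change. The only cosmetic difference is that the paper passes to rotated coordinates $(u,v)$ in the basis $\bigl(\tfrac{X-Y}{\sqrt2},\tfrac{X+Y}{\sqrt2}\bigr)$, so that the restricted form reads $\mathrm d\dot u\wedge\mathrm du+\mathrm d\dot z\wedge\mathrm dz+2\,\mathrm du\wedge\mathrm dz$ directly, whereas you propose to compute the Gram matrix in the frame $(X-Y,Z,\dot X-\dot Y,\dot Z)$ and then change basis; these are the same linear algebra organised two ways.
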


\begin{proof}
The pull-back of the Liouville form is
$$
\flat^*(p\,\mathrm dq)_{\mathbf{|}\left(x,y,z,\dot x,\dot y,\dot z\right)} =
 e^{-2z}\dot x\,\mathrm dx + e^{2z}\dot y\,\mathrm dy + \dot z\,\mathrm dz\;,
$$
so that the symplectic form writes

\begin{align}
&\flat^*(\mathrm dp\wedge \mathrm dq)_{\mathbf{|}\left(x,y,z,\dot x,\dot y,\dot z\right)}&\notag\\
&= \left( -2 e^{-2z}\dot x \mathrm dz\wedge \mathrm dx + e^{-2z}\mathrm d\dot x\wedge \mathrm dx\right)
+  \left( 2 e^{2z}\dot y \mathrm dz\wedge \mathrm dy + e^{2z}\mathrm d\dot y\wedge \mathrm dy\right)\notag\\
&\quad + \mathrm d \dot z\wedge \mathrm dz\tag{$*$}\label{formula.sp}\\
&=  \mathrm d \dot u\wedge \mathrm du +  \mathrm d \dot v\wedge \mathrm dv +  \mathrm d \dot z\wedge \mathrm dz + 2( \dot v\mathrm du + \dot u\mathrm dv)\wedge \mathrm dz\;,\notag
\end{align}
where $(u,v)$ are coordinates in the basis $(\frac{X-Y}{\sqrt{2}},\frac{X+Y}{\sqrt{2}})$, and $(\dot u,\dot v)$ are coordinates in the basis $(\frac{\dot X-\dot Y}{\sqrt{2}},\frac{\dot X+\dot Y}{\sqrt{2}})$. 
The restriction of this symplectic form to the distribution $\xi$ along our geodesic of type $A$ is 
$\mathrm d \dot u\wedge \mathrm du + \mathrm d \dot z\wedge \mathrm dz + 2 \mathrm du\wedge \mathrm dz$, since $\dot v \equiv 1$ and $\dot u \equiv 0$, and this eventually gives $\mathrm d h_1\wedge \mathrm d h_2 + \mathrm d h_3\wedge \mathrm d h_4$.
\end{proof}

\begin{prop}\label{prop.index.A}
Let $\gamma \colon t \in \r \longmapsto \gamma_0 + t \left( \frac{X+Y}{\sqrt{2}}\right)_{\mathbf{|}\gamma_0}\in \sol$ be a geodesic of type $A$, where $\gamma_0 \in \sol$. 
The linearization of the geodesic flow of $\sol$ along $\gamma$ restricted to the contact distribution $\xi$ has the following matrix in the basis $(h_1,h_2,h_3,h_4)$:
$$
\left[
\begin{matrix}
	1 & t 	& 0	& 0\cr
	0 & 1		& 0					& 0\cr
	0 & 0		& \cos(\sqrt 2 t)			& - \frac 1{\sqrt 2}\sin (\sqrt 2 t)\cr
	0 & 0		& \sqrt 2\sin (\sqrt 2 t)	& \cos(\sqrt 2 t)\cr
\end{matrix}
\right]\;.
$$
\end{prop}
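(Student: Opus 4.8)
The plan is to identify the linearization of the geodesic flow restricted to $\xi$ with the flow of Jacobi fields along $\gamma$, and to compute it by solving the Jacobi equation in the left-invariant frame. A normal Jacobi field $J$ along the unit-speed geodesic $\gamma$, written in the orthonormal frame $(f_1,Z)$ of $\gamma'^\perp$ as $J=a\,f_1+b\,Z$, encodes a tangent vector $(J,\tfrac{DJ}{\mathrm dt})$ to $S\sol$ lying in $\xi$, and the linearized flow $\mathrm d\phi^t$ sends $(J(0),\tfrac{DJ}{\mathrm dt}(0))$ to $(J(t),\tfrac{DJ}{\mathrm dt}(t))$. So the computation splits into: (i) writing the Jacobi equation for $\gamma$, (ii) solving the resulting constant-coefficient system, and (iii) matching initial conditions to the vectors $h_1,\dots,h_4$.

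First I would compute the Levi-Civita connection from the brackets $[Z,X]=X$, $[Z,Y]=-Y$, $[X,Y]=0$. Since $\gamma'=f_2$ and the coordinate $z$ is constant along a geodesic of type $A$ (the frame is then explicit), one finds that the frame rotates, $\nabla_{f_2}f_1=Z$ and $\nabla_{f_2}Z=-f_1$, and that the curvature terms entering $\tfrac{D^2J}{\mathrm dt^2}+R(J,\gamma')\gamma'=0$ are $R(f_1,f_2)f_2=f_1$ and $R(Z,f_2)f_2=-Z$, i.e. sectional curvature $+1$ in the $f_1,f_2$ plane and $-1$ in the $Z,f_2$ plane. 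Taking the rotation of the frame into account, the Jacobi equation becomes the system $a''=2b'$ and $b''+2a'-2b=0$, whose characteristic exponents are $0,0,\pm i\sqrt2$. This already explains the answer: a parabolic (shear) block and an elliptic block oscillating at frequency $\sqrt2$. Integrating gives $b(t)=A\cos(\sqrt2 t)+B\sin(\sqrt2 t)-C$, with a matching primitive for $a(t)$, the constants $A,B,C$ being fixed by the initial data.

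The delicate step, and the one I expect to be the main obstacle, is (iii): translating the $h_i$ into Jacobi initial data $(J(0),\tfrac{DJ}{\mathrm dt}(0))$. The lifts entering the definition of the $h_i$ must be read in the convention compatible with the coordinates of Lemma~\ref{lem.sp}; because the frame is not parallel, the horizontal lift differs from the naive prescription $\tfrac{DJ}{\mathrm dt}(0)=0$ by the connection term $\nabla_{\gamma'}$. Concretely, the lift of $f_1$ corresponds to $(J,\tfrac{DJ}{\mathrm dt})(0)=(f_1,Z)$ and the lift of $Z$ to $(Z,-f_1)$, while the vertical lifts $\dot f_1,\dot Z$ add $f_1$ and $Z$ to $\tfrac{DJ}{\mathrm dt}(0)$. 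Hence $h_1=(f_1,Z)$, $h_2=(Z,0)$, $h_3=(Z,f_1)$ and $h_4=(f_1,0)$ as pairs $(J(0),\tfrac{DJ}{\mathrm dt}(0))$; flowing each by the formulas above and re-expressing in $(h_1,h_2,h_3,h_4)$ then yields the stated matrix. As a safeguard against sign and normalization errors I would cross-check by an independent route that automatically respects the conventions of Lemma~\ref{lem.sp}: linearize the geodesic equations $\ddot x=2\dot x\dot z$, $\ddot y=-2\dot y\dot z$, $\ddot z=-e^{-2z}\dot x^2+e^{2z}\dot y^2$ directly in the chart, restrict to $\xi=\{\,\delta x+\delta y=0,\ \delta\dot x+\delta\dot y=0\,\}$, solve the resulting $4\times4$ system, and check one recovers the same flow. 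Finally I would verify the output is symplectic for $\mathrm dh_1\wedge \mathrm dh_2+\mathrm dh_3\wedge \mathrm dh_4$, as it must be by Lemma~\ref{lem.sp}.
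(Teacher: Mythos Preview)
Your approach is correct and genuinely different from the paper's. The paper does not set up and solve the Jacobi equation abstractly; instead it produces explicit Jacobi fields by geometric means. For the unipotent block it observes that $f_1$ and $t\,f_1+Z$ are restrictions to $\gamma$ of right-invariant (hence Killing) vector fields of $\sol$, namely the infinitesimal left translations by $f_1(\gamma_0)$ and by $e_3$; this gives the first two columns without any ODE. For the elliptic block it takes the one-parameter family $\gamma_k$ of type~$C$ geodesics from \cite{tro} converging to $\gamma$ as $k\to 0$, differentiates the explicit elliptic-function formulas in $k$, and reads off the oscillatory Jacobi fields $\sqrt{2}\sin(\sqrt{2}t)\,f_1+\cos(\sqrt{2}t)\,Z$ and $(1-\cos(\sqrt{2}t))\,f_1+\tfrac{1}{\sqrt{2}}\sin(\sqrt{2}t)\,Z$ directly. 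Your route trades this geometric input for a self-contained computation: you get the connection, the curvature, the system $a''=2b'$, $b''+2a'-2b=0$, and its solutions, but must then pay in step~(iii) by carefully identifying the $h_i$ with Jacobi initial data. Your identifications $h_1\leftrightarrow(f_1,Z)$, $h_2\leftrightarrow(Z,0)$, $h_3\leftrightarrow(Z,f_1)$, $h_4\leftrightarrow(f_1,0)$ are correct and do recover the stated matrix.

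One wording quibble in step~(iii): the Levi-Civita horizontal lift \emph{does} correspond to $\tfrac{DJ}{dt}(0)=0$; what is actually happening is that the $h_i$, as pinned down by Lemma~\ref{lem.sp}, are expressed in the coordinate (frame) trivialization of $T(T\sol)$ rather than the Levi-Civita one, and the discrepancy between the two is exactly the $\nabla_{\gamma'}$ term you invoke. Your cross-check via linearizing the geodesic equations in the chart is the right way to nail this down and is in fact the cleanest way to make your argument watertight, precisely because Lemma~\ref{lem.sp} is stated in those coordinates.
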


\begin{proof}
The vector field $h_1$ is the restriction along $\gamma$ of a Killing field of $\sol$. 
Likewise, the vector field $t \left( \frac{X - Y}{\sqrt{2}}\right)_{\mathbf{|}\gamma(t)} + Z_{\mathbf{|}\gamma(t)}$ is the restriction to $\gamma$ of  a Killing field of $\sol$. 
We deduce from that the two first columns of the matrix. Without loss of generality, we can assume that $\gamma_0 = 0$, so that $\gamma \colon t\in\r \mapsto tf_2(0)\in\sol$. 
A geodesic of type $C$ which is close to $\gamma$ writes 
$$
\gamma_k(t)=u_k(t)f_1(k) + v_k(t)f_2(k) + z_k(t)e_3 \;.
$$
Then 
$$
\left(\frac{\partial}{\partial k}\gamma_k(t)\right)_{\mathbf{|}k = 0} 
= \left(\frac{\partial}{\partial k}u_k(t)\right)_{\mathbf{|}k = 0}f_1(0) 
+ v_0(t)\frac{\partial f_2}{\partial k}_{\mathbf{|}k = 0} 
+ \left(\frac{\partial}{\partial k}z_k(t)\right)_{\mathbf{|}k = 0}e_3
$$
since $u_0(t)\equiv 0$ and we consider only the normal part of vector fields. 

Now, with the notations of \cite[\S4.4]{tro}, $u_k(t) = d + \mu k\sn\left(\mu(t+\tau)-K\right)$, where, since we assume that $\gamma_k(0) \equiv 0$, either $d = \mu k$ and $\tau = 0$, or $d = 0$ and $\tau = -\frac K\mu$. 
In the first case, we get 
$\left(\frac{\partial}{\partial k}u_k(t)\right)_{\mathbf{|}k = 0} = \sqrt 2 \left(1 + \sin (\sqrt 2 t - \frac \pi 2)\right)$, 
while $v_0(t)=t$ and $\frac{\partial f_2}{\partial k}_{\mathbf{|}k = 0} = \frac{\partial \bar z}{\partial k}_{\mathbf{|}k = 0} f_1(0)$. 
Now, keeping these notations: $z_k(t) = \bar z + h(\mu t - K)$, so that
$\frac{\partial z_k}{\partial k}_{\mathbf{|}k = 0} = 
\frac{\partial \bar z}{\partial k}_{\mathbf{|}k = 0} + \cos(\sqrt 2 t - \frac \pi 2)$. Thus, the vector field 
$$
\frac{\partial \bar z}{\partial k}_{\mathbf{|}k = 0}(tf_1 + e_3) + \sqrt 2 \left( -\cos(\sqrt 2 t) + 1\right)f_1 + \sin(\sqrt 2 t)e_3
$$
along $\gamma$ is a Jacobi field. We deduce that 
$
\left(1  -\cos(\sqrt 2 t)\right)f_1 + \frac 1 {\sqrt 2}\sin(\sqrt 2 t)e_3
$
is Jacobi itself and then the fourth column of the matrix. In the second case, we get 
$\left(\frac{\partial}{\partial k}u_k(t)\right)_{\mathbf{|}k = 0} = -\sqrt 2\sin(\sqrt 2 t)$, while $\frac{\partial \bar z}{\partial k}_{\mathbf{|}k = 0} = 1$ and $\frac{\partial z_k}{\partial k}_{\mathbf{|}k = 0} = 
\frac{\partial \bar z}{\partial k}_{\mathbf{|}k = 0} - \cos(\sqrt 2 t)$. Hence, the vector field 
$$
\left(t - \sqrt 2\sin(\sqrt 2 t)\right)f_1 + \left(1  -\cos(\sqrt 2 t)\right)e_3
$$ 
along $\gamma$ is Jacobi, so that $\sqrt 2\sin(\sqrt 2 t)f_1 + \cos(\sqrt 2 t)e_3
$
 is Jacobi itself.
 \end{proof}

\subsubsection{Linearized flow along a geodesic of type $B$}

Among geodesics of type $B$, only those directed by $e_3$ will be considered. Let then $\gamma \colon t\in\r \mapsto \gamma_0 + te_3 \in \sol$ be such a geodesic, where $\gamma_0\in\sol$. 
The orthogonal plane to $\gamma'(t)$ in $T\sol$ is generated by $X$ and $Y$, therefore
$$
\xi_{\mathbf{|}\left(\gamma(t),\gamma'(t)\right)} = \langle X,\dot X, Y, \dot Y\rangle\;.
$$
Let
$$
g_1:=\frac12X + \dot X \quad,\quad g_2:= X \quad,\quad g_3:= -\frac12 Y + \dot Y \quad,\quad g_4:= Y \;.
$$

\begin{prop}\label{prop.index.B}
Let $\gamma \colon t\in\r \mapsto \gamma_0 + te_3 \in \sol$ be a geodesic of type $B$, where $\gamma_0 \in \sol$. 
The linearization of the geodesic flow of $\sol$ along $\gamma$ restricted to the contact distribution $\xi$ has the following matrix in the basis $(g_1,g_2,g_3,g_4)$:
$$
\left[
\begin{matrix}
	e^t & 0 	& 0	& 0\cr
	0 & e^{-t}		& 0					& 0\cr
	0 & 0		& e^{-t}			& 0\cr
	0 & 0		& 0	& e^t\cr
\end{matrix}
\right]\;,
$$
while the canonical symplectic form $\flat^*(\mathrm d p \wedge \mathrm d q)$ on the contact distribution $\xi$ along $\gamma$ is given by 
$$
\mathrm d g_1\wedge \mathrm d g_2 + \mathrm d g_3\wedge \mathrm d g_4\;.
$$

\end{prop}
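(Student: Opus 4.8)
The plan is to follow the scheme of Proposition~\ref{prop.index.A}, but the computation turns out to be simpler for type $B$ because the relevant frame is parallel rather than rotating. First I would compute the Levi-Civita connection of $(X,Y,Z)$ from the brackets $[Z,X]=X$, $[Z,Y]=-Y$, $[X,Y]=0$ using Koszul's formula. The decisive feature, to be contrasted with the type $A$ case, is that $\nabla_Z X=\nabla_Z Y=\nabla_Z Z=0$, so that $(X,Y,Z)$ is parallel along the geodesic $\gamma$ directed by $Z=e_3$. A direct curvature computation then yields $R(X,Z)Z=-X$ and $R(Y,Z)Z=-Y$, that is, both vertical planes have sectional curvature $-1$. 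Hence a normal Jacobi field $J=a(t)X+b(t)Y$ satisfies the decoupled constant-coefficient system $\ddot a=a$, $\ddot b=b$, whose solutions are spanned by $e^{\pm t}X$ and $e^{\pm t}Y$.

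Next I would exhibit four explicit Jacobi fields and track them through the lift to $\xi$. Two of them are free of charge: since the metric does not depend on $x$ or $y$, the coordinate fields $\frac{\partial}{\partial x}=e^{-z}X$ and $\frac{\partial}{\partial y}=e^{z}Y$ are Killing, so their restrictions to $\gamma$ are the Jacobi fields $e^{-t}X$ and $e^{t}Y$. The point is that the frame vectors $g_2=X$ and $g_4=Y$ carry the factors $e^{z}$ and $e^{-z}$, so that following these two Killing fields in the moving basis $(g_1,g_2,g_3,g_4)$ produces precisely the columns $g_2\mapsto e^{-t}g_2$ and $g_4\mapsto e^{t}g_4$. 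The two remaining solutions $e^{t}X$ and $e^{-t}Y$ of the trivial Jacobi system account, after lifting, for $g_1\mapsto e^{t}g_1$ and $g_3\mapsto e^{-t}g_3$; the coefficients $\tfrac12$ and $-\tfrac12$ in the definition of $g_1$ and $g_3$ are exactly what is needed for each $g_i$ to be a genuine eigendirection of the linearized flow. Unlike in type $A$, no auxiliary family of geodesics is required. Collecting the four columns gives the announced matrix $\mathrm{diag}(e^{t},e^{-t},e^{-t},e^{t})$.

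For the symplectic form I would simply restrict the expression~(\ref{formula.sp}) to $\xi$ along $\gamma$, exactly as in Lemma~\ref{lem.sp}. Along this geodesic $\dot x=\dot y=0$, and on $\xi$ the forms $\mathrm dz$ and $\mathrm d\dot z$ vanish, so every cross term disappears and only $e^{-2z}\,\mathrm d\dot x\wedge\mathrm dx+e^{2z}\,\mathrm d\dot y\wedge\mathrm dy$ remains; substituting the coordinates dual to $(g_1,g_2,g_3,g_4)$ then collapses this to $\mathrm dg_1\wedge\mathrm dg_2+\mathrm dg_3\wedge\mathrm dg_4$. I expect the only real difficulty to be bookkeeping: one has to fix the horizontal and vertical lift conventions, together with the numerical coefficients, so that a single basis $(g_1,g_2,g_3,g_4)$ simultaneously diagonalizes the flow \emph{and} normalizes the symplectic form. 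The geometric content, by contrast, is transparent and is the whole substance of the statement—the frame is parallel and the vertical curvature equals $-1$, which replaces the rotation of the type $A$ case by the pure hyperbolic dilation $e^{\pm t}$.
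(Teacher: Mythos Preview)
Your strategy is correct and overlaps with the paper's on two of the four ingredients: both arguments invoke the Killing fields $e_1=\partial/\partial x$ and $e_2=\partial/\partial y$ for the second and fourth columns, and both obtain the symplectic form by restricting formula~(\ref{formula.sp}) along $\gamma$ using $\dot x=\dot y=0$. The difference lies in how the remaining two Jacobi fields are produced. The paper does \emph{not} compute the curvature tensor; instead it observes that $\gamma$ is the intersection of a leaf of the totally geodesic hyperbolic foliation $\mathcal H'=\{dy=0\}$ with a leaf of $\mathcal H''=\{dx=0\}$, which yields a splitting $\xi=\xi'\oplus\xi''$ and reduces the problem to the geodesic flow of the hyperbolic plane on each factor. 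It then writes down explicit one-parameter families $\gamma_a\subset\mathcal H'$ and $\gamma_b\subset\mathcal H''$ of type~$B$ geodesics through the origin (quoting the formulas of \cite{tro}) and differentiates in the parameter to extract the Jacobi fields $\sinh(t)X$ and $\sinh(t)Y$, hence the first and third columns. So contrary to what you anticipate, an auxiliary family of geodesics \emph{is} used in the paper's argument for type~$B$ as well.

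Your route---Koszul's formula giving $\nabla_ZX=\nabla_ZY=\nabla_ZZ=0$, then $R(X,Z)Z=-X$, $R(Y,Z)Z=-Y$, then the constant-coefficient Jacobi system $\ddot a=a$, $\ddot b=b$---is the infinitesimal version of the paper's geometric reduction and is arguably more self-contained, since it avoids importing the explicit geodesic formulas from \cite{tro}. What the paper's approach buys is a conceptual explanation: the decoupling you observe algebraically is precisely the statement that $\mathcal H'$ and $\mathcal H''$ are totally geodesic and hyperbolic. Your caveat about bookkeeping is well placed; matching the eigenbasis of the flow to the specific frame $(g_1,g_2,g_3,g_4)$ with its coefficients $\pm\tfrac12$ is where all the care goes, and neither argument is complete without it.
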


\begin{proof}
The expression of the symplectic form follows from the formula (\ref{formula.sp}) obtained in the proof of Lemma~\ref{lem.sp}, since along $\gamma$, $\dot x = \dot y = 0$. 
The geodesic $\gamma$ is the intersection of the leaves of $\mathcal H'$ and $\mathcal H''$ containing it, which are totally geodesic. 
Hence the direct sum decomposition  $\xi = \xi' \oplus \xi''$, where $\xi'$ is the contact distribution of $S^*\mathcal H'$, and $\xi''$ is the contact distribution of $S^*\mathcal H''$. 
The geodesic flow restricted to $\xi'$ or $\xi''$ is the geodesic flow of the hyperbolic plane. The fields $e_1$ and $e_2$ are Killing, 
providing the second and fourth columns of the matrix. 
We can assume that $\gamma_0 = 0$. Geodesics of type $B$ of $\mathcal H'$ passing through $0\in\sol$ at $t = 0$ write
$$
\gamma_a(t) = a\frac{\sinh(t)}{\cosh(t) - c_0\sinh(t)}e_1 - \ln\left(\cosh(t) - c_0 \sinh(t)\right) e_3
$$
with $a^2 + c_0^2 = 1$, see also \cite[\S5.2]{tro}. Therefore $\sinh(t)X$ is Jacobi. Likewise, geodesics of type $B$ of $\mathcal H''$ passing through $0\in\sol$ at $t = 0$ write
$$
\gamma_b(t) = b\frac{\sinh(t)}{\cosh(t) + c_0\sinh(t)}e_2 + \ln\left(\cosh(t) + c_0 \sinh(t)\right) e_3
$$
with $b^2 + c_0^2 = 1$,  so that $\sinh(t)Y$ is Jacobi. Hence the result.
\end{proof}

\subsection{Closed $\sol$-manifolds}

\subsubsection{Classification}

Recall the following:
\begin{lem}\label{lem.homol}
Let $L$ be the suspension of a diffeomorphism of the torus ${\r^2}/{\z^2}$ defined by a linear map $A \in \glz$. Assume that $(A - I)$ is invertible too. Then, the homology with integer coefficients of $L$ satisfy the following isomorphisms 
\begin{align*}
&H_0(L;\z)\cong \z \quad;\quad
H_1(L;\z)\cong \z \oplus \left(\z^2/(A - I)(\z^2)\right) \;; \\
&H_2(L;\z)\cong \begin{cases}
\z &\textrm{ if } \det(A)>0\\
\z/2\z &\textrm{ if } \det(A)<0
\end{cases}
\quad\textrm{and}\quad
H_3(L;\z)\cong\begin{cases}
\z &\textrm{ if } \det(A)>0\\
0 &\textrm{ if } \det(A)<0
\end{cases}\;.
\end{align*}
\qed
\end{lem}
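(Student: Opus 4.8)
The plan is to realize $L$ as the mapping torus of the diffeomorphism $f\colon T^2\to T^2$ induced by $A$, namely $L = T^2\times[0,1]/\bigl((x,1)\sim(f(x),0)\bigr)$, which is a fibre bundle over $S^1$ with fibre $T^2$ and monodromy $f$. For such a bundle one has the Wang exact sequence
$$\cdots\to H_n(T^2;\z)\xrightarrow{f_*-\mathrm{id}} H_n(T^2;\z)\to H_n(L;\z)\to H_{n-1}(T^2;\z)\xrightarrow{f_*-\mathrm{id}} H_{n-1}(T^2;\z)\to\cdots,$$
whose connecting homomorphism is $f_*-\mathrm{id}$ in each degree. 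Since $H_0(T^2;\z)\cong\z$, $H_1(T^2;\z)\cong\z^2$, $H_2(T^2;\z)\cong\z$ and all higher groups vanish, the whole computation reduces to understanding $f_*-\mathrm{id}$ on these three groups.

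First I would identify the monodromy action. On $H_0$ it is the identity, hence $f_*-\mathrm{id}=0$. On $H_1\cong\z^2$ it is given by the matrix $A$, so $f_*-\mathrm{id}=A-I$. On $H_2\cong\z$ it is multiplication by $\det(A)$, the degree of $f$; as $A\in\glz$ we have $\det(A)=\pm1$, and $f_*-\mathrm{id}$ is either $0$ (when $\det A=1$) or multiplication by $-2$ (when $\det A=-1$). This dichotomy is exactly what produces the two cases in the statement, and pinning down the $H_2$-action as multiplication by the determinant is the one place that requires a little care; the rest is bookkeeping.

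Then I would read the homology off the sequence degree by degree. In degree $3$ the sequence gives $H_3(L;\z)\cong\ker\bigl(\det(A)-1\colon\z\to\z\bigr)$, which is $\z$ if $\det A=1$ and $0$ if $\det A=-1$. In degree $2$ one gets a short exact sequence $0\to\operatorname{coker}(\det(A)-1)\to H_2(L;\z)\to\ker(A-I)\to0$; here the hypothesis that $A-I$ be invertible is crucial, as it forces $\ker(A-I)=0$, so $H_2(L;\z)\cong\operatorname{coker}(\det(A)-1)$, namely $\z$ or $\z/2\z$ according to the sign of $\det A$. In degree $1$ the sequence yields $0\to\z^2/(A-I)\z^2\to H_1(L;\z)\to\z\to0$, which splits because the quotient $H_0(T^2;\z)\cong\z$ on the right is free, giving $H_1(L;\z)\cong\z\oplus\bigl(\z^2/(A-I)\z^2\bigr)$. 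Finally $H_0(L;\z)\cong\z$ since $L$ is connected.

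The computation has no serious obstacle: the only genuinely substantive points are the identification of the monodromy on $H_2$ with $\det(A)$ together with the resulting case split, the use of the invertibility of $A-I$ to kill $\ker(A-I)$ in degree $2$, and the observation that the degree-$1$ extension splits for freeness reasons. As an alternative one could cut $L$ along two fibres into two pieces each homotopy equivalent to $T^2$ and run Mayer--Vietoris, but the Wang sequence keeps the monodromy $A$ visible throughout and seems the most economical route.
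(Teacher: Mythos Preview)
Your argument via the Wang exact sequence is correct and complete. The paper itself gives no proof of this lemma (it is stated with a bare \qed\ as a standard fact), so there is nothing to compare against; your approach is precisely the expected one, and you have correctly identified the three nontrivial points: the action on $H_2$ by $\det(A)$, the injectivity of $A-I$ on $\z^2$ coming from the hypothesis, and the splitting of the degree-one extension by freeness of the quotient.
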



Note besides that in the situation of Lemma~\ref{lem.homol}, if $\Lambda$ is the fundamental group of $L$ based at some point $x_0 \in L$ and $\Lambda_0 \cong \z^2$ is the fundamental group of the fiber of $L \to \r/\z$ containing $x_0$, then  the exact sequence $0\to \Lambda_0 \to \Lambda \to \z \to 0$ splits. Therefore the derived subgroup  $[\Lambda, \Lambda]$ coincides with $\left( A - I\right)(\Lambda_0)$.

From Hurewicz's isomorphism, we deduce the relation
$$
\tors H_1(L;\z) \cong \Lambda_0/\left( A - I\right)(\Lambda_0)\;.
$$ 

\begin{dfn}
A linear map $A \in \glz$ is called hyperbolic iff it has two real eigenvalues different from $\pm1$.
\end{dfn}

\begin{lem}\label{lem.quotient}
Let $L$ be the suspension of a diffeomorphism of the torus ${\r^2}/{\z^2}$ defined by a hyperbolic  linear map $A \in \glz$. There exists a lattice $\Lambda$ of $\isom(\sol)$ such that $L$ is diffeomorphic to the quotient $\raisebox{-.65ex}{\ensuremath{\Lambda}}\!\backslash \sol$. 
Moreover, $\Lambda$ is generated by a lattice $\Lambda_0$ of $K$ and an isometry 
$$
l \colon (x,y,z) \in \sol \longmapsto 
\left(\varepsilon_1e^\lambda x,\varepsilon_2e^{-\lambda} y,z + \lambda \right)\in \sol
$$
where $\lambda\in\r^*$, $\varepsilon_1,\varepsilon_2\in \{\pm1\}$.
\end{lem}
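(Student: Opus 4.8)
The plan is to realize the abstract fundamental group $\pi_1(L)\cong\z^2\rtimes_A\z$ as an explicit lattice acting on $\sol$, by diagonalizing $A$ and matching its eigenvalues with the dilation built into the group law of $\sol$.

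First I would exploit hyperbolicity to diagonalize. Since $A\in\glz$ is hyperbolic, it has two real eigenvalues $\mu_1,\mu_2$ with $|\mu_i|\neq 1$ and $\mu_1\mu_2=\det A=\pm1$. Ordering them so that $|\mu_1|>1>|\mu_2|$ and setting $\lambda:=\log|\mu_1|>0$ and $\varepsilon_i:=\operatorname{sign}(\mu_i)$, one gets $\mu_1=\varepsilon_1 e^{\lambda}$ and $\mu_2=\varepsilon_2 e^{-\lambda}$, the second because $|\mu_2|=1/|\mu_1|=e^{-\lambda}$ as $|\det A|=1$. Let $P\in\mathrm{GL}_2(\r)$ have the eigenvectors of $A$ as columns, so that $D:=P^{-1}AP=\operatorname{diag}(\mu_1,\mu_2)$.

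Next I would build the lattice. Identifying $K\cong\r^2$ with the coordinates $(x,y)$, set $\Lambda_0:=P^{-1}(\z^2)\subset K$, a lattice of $K$. Since $\phi:=P^{-1}$ intertwines $A$ and $D$ (i.e.\ $\phi A=D\phi$) and $A(\z^2)=\z^2$, the diagonal map $D$ preserves $\Lambda_0$. Now define $l$ as in the statement: a direct check on the group law shows that for $\varepsilon_1=\varepsilon_2=1$ the map $l$ is exactly left translation by $(0,0,\lambda)=\lambda e_3$, and in general it is the composition of this translation with the isotropy involution $r_Y$, $r_X=\rho^2 r_Y$, or $\rho^2$ according to the signs $(\varepsilon_1,\varepsilon_2)$; in every case $l\in\isom(\sol)$. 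Conjugating a translation $\tau=(\alpha,\beta,0)\in\Lambda_0$ by $\lambda e_3$ scales it by $\operatorname{diag}(e^{\lambda},e^{-\lambda})$, after which the signs are applied, so $l\tau l^{-1}$ is translation by $D(\alpha,\beta)$; as $D$ preserves $\Lambda_0$, $l$ normalizes $\Lambda_0$. Thus $\Lambda:=\langle\Lambda_0,l\rangle$ is the semidirect product $\Lambda_0\rtimes_D\langle l\rangle\cong\z^2\rtimes_A\z$, and reading off the $z$-coordinate shows $\Lambda$ is discrete in $\isom(\sol)$, torsion-free and cocompact, hence it acts freely and properly discontinuously with compact quotient.

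Finally I would identify the quotient with $L$. Left multiplication by $K$ acts on $\sol$ by ordinary translation in $(x,y)$ independently of $z$, so $\Lambda_0\backslash\sol\cong(K/\Lambda_0)\times\r_z=T^2\times\r$, and $l$ descends to the map shifting $z$ by $\lambda$ while acting on $T^2=K/\Lambda_0$ by the automorphism induced by $D$. Therefore $\Lambda\backslash\sol$ is the mapping torus of the $D$-automorphism of $K/\Lambda_0$, which via $\phi$ is conjugate to $A$ acting on $\r^2/\z^2$; since conjugate toral monodromies yield diffeomorphic mapping tori, $\Lambda\backslash\sol$ is diffeomorphic to the suspension $L$. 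Alternatively one may note that both spaces are closed aspherical three-manifolds with fundamental group $\z^2\rtimes_A\z$ and invoke the rigidity of $\sol$-manifolds. The main obstacle is the sign bookkeeping when $\det A<0$: the eigenvalues then have opposite signs, and one must verify that the required reflections are realized by genuine isometries of $\sol$ (the involutions $r_X,r_Y,\rho^2$) and that conjugation by the resulting $l$ reproduces $A$ up to linear conjugacy; the intertwining $\phi=P^{-1}$, which is in general not an isometry of $K$, is precisely what makes $\Lambda_0$ $D$-invariant even though it is not the square lattice.
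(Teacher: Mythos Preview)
Your proof is correct and follows essentially the same approach as the paper: diagonalize $A$ so that the standard coordinates $(x,y)$ of $K$ become an eigenbasis, transport $\z^2$ to a lattice $\Lambda_0\subset K$, realize the monodromy as the composition of the left translation by $\lambda e_3$ with the appropriate sign involution in $\isom(\sol)$, and identify the quotient with the suspension via the resulting split extension $0\to\Lambda_0\to\Lambda\to\z\to0$. You spell out a few verifications (discreteness, torsion-freeness, that conjugate toral monodromies give diffeomorphic mapping tori) that the paper leaves implicit, but there is no substantive difference in strategy.
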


\begin{proof}
We identify $\r^2$ with the derived subgroup $K$ of $\sol$ in a way that $e_1,e_2$ is a basis of eigenvectors of $A$ associated to the eigenvalues $\varepsilon_1e^\lambda$ and $\varepsilon_2e^{-\lambda}$ where $\lambda\in\r^*$, $\varepsilon_1,\varepsilon_2\in \{\pm1\}$. 
The subgroup $\z^2$ is then identified with a lattice $\Lambda_0\subset K$ invariant by $A$. Let $l$ be the product of the left multiplication by $\lambda e_3$ with the isometry $(x,y,z) \in \sol \mapsto 
\left(\varepsilon_1 x,\varepsilon_2 y,z  \right)\in \sol$.

Denote by $\Lambda$ the subgroup of $\isom(\sol)$ generated by $l$ and the left translations by elements of $\Lambda_0$, this is a lattice of $\isom(\sol)$ which satisfies the split exact sequence $0\to \Lambda_0 \to \Lambda \to \z \to 0$, where the action of $l$ by conjugation on $\Lambda_0$ coincides with the action of $A$. 
The quotient $\raisebox{-.65ex}{\ensuremath{\Lambda}}\!\backslash \sol$ is diffeomorphic to~$L$.
\end{proof}

Let $L$ be the suspension of a diffeomorphism of the torus ${\r^2}/{\z^2}$ defined by a hyperbolic  linear map $A \in \glz$. We provide $L := \raisebox{-.65ex}{\ensuremath{\Lambda}}\!\backslash \sol$ with the metric $\sol$ given by Lemma~\ref{lem.quotient}. 
The basis $B \cong S^1$ of the fibration $L \to B$ is then endowed with a metric induced by the one of $L$. 
The morphism $P$ induces a morphism $P_L \colon \isom(L) \to \isom(B)$ between their respective isometry groups.

Note that the involution $\rho^2$ induces an isometry of $L$ which belongs to the kernel of $P_L$. Likewise, a translation $(x,y,z) \in \sol \longmapsto 
\left( x + \alpha, y + \beta,z \right)\in \sol$ induces an isometry of $L$ if and only if $(\alpha,\beta) \in (A - I)^{-1}(\Lambda_0)$. We denote by $F := (A - I)^{-1}(\Lambda_0)/\Lambda_0$ this group of  translations.

\begin{lem}\label{lem.solvar}
Let $L$ be the suspension of a hyperbolic diffeomorphism of the torus endowed with its metric $\sol$ given by Lemma~\ref{lem.quotient}. 
Then, the kernel of the morphism $P_L$ is generated by $\rho^2$ and $F$ while its image is finite. The latter is reduced to isometries which preserve the orientation of $B$ when $L$ is nonorientable.
\end{lem}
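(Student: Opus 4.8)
The plan is to realise $\isom(L)$ as the quotient $N/\Lambda$, where $N=N_{\isom(\sol)}(\Lambda)$ is the normaliser of the deck group $\Lambda$ inside the isometry group of the universal cover $\sol$, and to read $P_L$ off from the morphism $P$. Writing $\isom(\sol)=\sol\rtimes D_4$, I would first record the action of $P$ on generators: a left translation by $(\alpha,\beta,c)$ maps to the translation $z\mapsto z+c$ of $\r$, the element $\rho$ maps to $z\mapsto -z$, while $\rho^2$ and $r_Y$ map to the identity. Hence $\ker P=K\rtimes\langle\rho^2,r_Y\rangle$ (the isometries with $\varepsilon_z=+1$ and trivial $z$-shift), and since $P(l)$ is the translation by $\lambda$ and $P(\Lambda_0)=0$, one gets $P(\Lambda)=\lambda\z$, $B=\r/\lambda\z$ and $\isom(B)\cong O(2)$. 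The recurring bookkeeping device throughout will be the conjugation action $M_g\in GL(K)$ of $g\in N$ on the derived subgroup $K$, which must preserve $\Lambda_0=\Lambda\cap K$; this is where hyperbolicity of $A$ enters.

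For the kernel, note that $P_L(g\Lambda)=\mathrm{id}_B$ exactly when $P(g)\in\lambda\z$; absorbing a power of $l$ then gives $\ker P_L\cong(\ker P\cap N)/\Lambda_0$. It remains to decide which $g=t_v k$ with $k\in\langle\rho^2,r_Y\rangle$ normalise $\Lambda$. For such $g$ the $z$-shift is $0$, so $M_g=\sigma(k)$ and one needs $\sigma(k)\Lambda_0=\Lambda_0$. The key rigidity input is that a lattice invariant under the hyperbolic diagonal $A=\mathrm{diag}(\varepsilon_1 e^\lambda,\varepsilon_2 e^{-\lambda})$ meets each eigenaxis only at $0$ (otherwise iterating $A$ would accumulate lattice points at the origin); this forces $\Lambda_0$ to be non-invariant under the eigenaxis reflections $r_Y,r_X$, so only $k\in\{1,\rho^2\}$ survive. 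For those one computes $t_v\,l\,t_v^{-1}=t_{(I-A)v}\,l$, leaving the condition $v\in(A-I)^{-1}(\Lambda_0)$, and reducing modulo $\Lambda_0$ yields exactly $\langle\rho^2\rangle$ together with $F$.

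For the image I would argue as follows. A rotation of $B$ comes from a $z$-preserving $g\in N$, for which $M_g=\mathrm{diag}(\pm e^c,\pm e^{-c})$ commutes with $A$ and preserves $\Lambda_0$, where $c$ is the $z$-shift of $g$. Such diagonal automorphisms lie in the stabiliser of $\Lambda_0$ inside the one-parameter group $\{\mathrm{diag}(s,t):|st|=1\}$; by Dirichlet's unit theorem this stabiliser is discrete of rank one and contains $A$ (the value $c=\lambda$). Hence the admissible $c$ form a discrete subgroup $\lambda_0\z\supseteq\lambda\z$, and the rotation part of the image is the finite group $\lambda_0\z/\lambda\z$; since reflections can at worst double it, the image of $P_L$ is finite.

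Finally, the orientation statement, which I expect to be the main obstacle since it is where the conjugation data must be tracked most carefully, follows from the same calculus. A reflection of $B$ arises from a $z$-reversing $g\in N$, and such a $g$ sends $l$ to an element of $\Lambda$ with opposite $z$-shift, i.e. $g\,l\,g^{-1}\in l^{-1}\Lambda_0$, whence $M_g A M_g^{-1}=A^{-1}$. Because $A$ has distinct real eigenvalues, $A$ and $A^{-1}$ are conjugate only when their eigenvalue multisets coincide, which (as no eigenvalue equals $\pm 1$) happens precisely when $\det A=\varepsilon_1\varepsilon_2=+1$. Since $L$ is nonorientable exactly when $\det A=-1$ by Lemma~\ref{lem.homol}, no reflection-inducing isometry can normalise $\Lambda$ in that case, so the image of $P_L$ then consists only of orientation-preserving isometries of $B$. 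The one delicate point is pinning down $N$ and identifying $M_g$ with the diagonal-or-eigenaxis-swapping part of $D_4$; once this dictionary is fixed, each assertion reduces to the lattice rigidity above and to Dirichlet's theorem.
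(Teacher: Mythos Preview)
Your proof is correct and follows the same architecture as the paper's: both identify $\isom(L)$ with $N_{\isom(\sol)}(\Lambda)/\Lambda$ and analyse $P_L$ through the conjugation action on $K$. For the kernel, your eigenaxis argument (that $\Lambda_0$ meets each eigenline only at $0$, so $r_X,r_Y$ cannot preserve it) is exactly the paper's observation that ``$\Lambda_0$ does not contain any nontrivial multiple of $e_1$ or $e_2$'', just with the implication $v-r_Yv\in\Lambda_0\cap\r e_1$ spelled out. For finiteness, the paper argues more directly that conjugation by elements of $\sol$ close to $K$ gives linear maps close to the identity which cannot preserve $\Lambda_0$; this is the same discreteness-of-the-lattice-stabiliser fact you use, so invoking Dirichlet's unit theorem is heavier machinery than needed (and strictly speaking applies to orders in real quadratic fields rather than to arbitrary $A$-invariant lattices---the statement you actually need is just that $\mathrm{Stab}_{GL_2(\r)}(\Lambda_0)$ is discrete). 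The genuine difference is in the orientation claim: the paper writes down an explicit lift $k$ and computes $lklk^{-1}$ by hand to see it lies outside $\Lambda_0$ when $\varepsilon_1\varepsilon_2=-1$, whereas you extract the cleaner structural consequence $M_gAM_g^{-1}=A^{-1}$ from $g\,l\,g^{-1}\in l^{-1}\Lambda_0$ and conclude by comparing eigenvalue multisets. Your route here is more conceptual and avoids coordinates; the paper's buys concreteness at the cost of a short computation.
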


\begin{proof}
The group  $\isom(L)$ coincides with the quotient by $\Lambda$ of the normalizer of $\Lambda$ in $\isom(\sol)$. An element of the kernel of $P_L$ preserves all the leaves of $\mathcal{F}$. 
It cannot induce any reflection on those leaves since the axes of these reflections would be directed by $e_1$ or $e_2$, but $\Lambda_0$ does not contain any nontrivial multiple of these elements. 
It follows that, up to multiplication by $\rho^2$, it is a translation in the fibers and then, an element of $F$. The image of $P_L$ is a subgroup of $\isom(B)$ which cannot be dense since the action on $K$ by conjugation by an element of $\sol$ close to $K$ is a linear map close to the identity, which cannot preserve $\Lambda_0$. 
Indeed, the fibers of $L$ close to a given fiber are not isometric to it. 
Thus, the image of $P_L$ is a finite subgroup of $\isom(B)$. If $\tilde k$ is such an isometry which reverses the orientation of $B$, it has a lift  $k$ of the form
$$
k(x,y,z) = \left(\eta_1e^\theta y + \alpha,\eta_2e^{-\theta} x + \beta, \theta - z \right)
$$
with $\eta_1,\eta_2\in \{\pm1\}$, $\theta,\alpha,\beta\in \r$. We get
$$
lklk^{-1}(x,y,z) = \left(\varepsilon_1\varepsilon_2(x - \alpha) + \varepsilon_1e^\lambda \alpha,
\varepsilon_1\varepsilon_2(y - \beta) + \varepsilon_2e^{-\lambda} \beta,
z \right)
$$
therefore such an isometry does not belong to $\Lambda_0$ when the sign $\varepsilon_1\varepsilon_2$ of the determinant of $A$ is negative. Hence the result.
  \end{proof}

Let $L$ be a $\sol$ variety given by Lemma~\ref{lem.solvar} and $\langle k \rangle$ be a cyclic group of isometries of $L$ acting without fixed point. If $P_L(k)$ is an isometry of the base $B$ which preserves the orientation, then the quotient of $L$ by $\langle k \rangle$ is also the suspension of a hyperbolic diffeomorphism of the torus. 
Should  the opposite occur,  $P_L(k)$ is a reflection of $B$ and we can assume that $k$ is of order $2$. The quotient $L/\langle k \rangle$ is no longer a bundle over $B$ and is orientable. 
Indeed, $L$ is necessarily orientable from Lemma~\ref{lem.solvar}, while over a fixed point of $P_L(k)$, the linear map associated to $k$ cannot be a rotation by an  angle of $\frac\pi2 \mod \pi$, it must be then a reflection  in the associated fibers, therefore $k$ preserves the orientation of~$L$.

\begin{dfn}
Following \cite{morimoto}, we call sapphire the quotient of a $\sol$-bundle $L$ given by Lemma~\ref{lem.solvar} by an involutive isometry acting without fixed point and inducing a reflection on the basis $B$.
\end{dfn}

The second homology group with integer coefficients of a sapphire vanishes, its first homology group is torsion. We call $\sol$-manifold any manifold obtained as a quotient of $\sol$ by a discrete subgroup of isometries acting without fixed point. Recall the

\begin{thm}\label{thm.classif}
The closed $\sol$-manifolds are the sapphires and the suspensions of diffeomorphisms of the torus ${\r^2}/{\z^2}$ defined by hyperbolic  linear maps.
\end{thm}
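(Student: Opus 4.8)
The statement has two directions. That suspensions of hyperbolic torus diffeomorphisms and sapphires are indeed $\sol$-manifolds is essentially already established: suspensions are realized as quotients $\Lambda\backslash\sol$ by Lemma~\ref{lem.quotient}, and a sapphire is by definition the quotient of such an $L$ by a free involutive isometry, hence of the form $\Gamma\backslash\sol$ for $\Gamma$ the discrete group generated by $\Lambda$ and a lift of the involution to $\isom(\sol)$ normalizing $\Lambda$. The content of the theorem is the converse, so the plan is to classify the discrete subgroups $\Gamma\subset\isom(\sol)$ acting freely and cocompactly.

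First I would isolate the lattice $\Gamma_1:=\Gamma\cap\sol$. Since the isotropy group is the finite group $D_4$, one has $\isom(\sol)=\sol\rtimes D_4$, so $\sol$ is the identity component and $\Gamma_1$ is a finite-index, torsion-free, cocompact lattice in the solvable Lie group $\sol$. Setting $\Lambda_0:=\Gamma_1\cap K$, I would show that $\Lambda_0$ is a lattice of $K\cong\r^2$ and that $\Gamma_1/\Lambda_0\cong\z$. The key point is that the image $P(\Gamma_1)\subset\r$ is discrete: an element of $\Gamma_1$ with small $z$-component acts on $K$ by conjugation as $\mathrm{diag}(e^z,e^{-z})$, a linear map close to the identity, which must preserve $\Lambda_0$ and is therefore trivial --- this is exactly the argument used in Lemma~\ref{lem.solvar}. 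A generator of $\Gamma_1/\Lambda_0$ with $z$-shift $\lambda_0\neq0$ then acts on $\Lambda_0$ by $A_0=\mathrm{diag}(e^{\lambda_0},e^{-\lambda_0})\in\slz$, of trace $2\cosh\lambda_0>2$, hence hyperbolic; thus $\Gamma_1\backslash\sol$ is a suspension of a hyperbolic diffeomorphism, and I would record that the eigendirections of $A_0$, namely the coordinate axes $e_1,e_2$, are irrational with respect to $\Lambda_0$.

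Next I would study the finite quotient $\Gamma/\Gamma_1$ through the character $\ell\colon\Gamma\to\{\pm1\}$ recording whether an element preserves or reverses the orientation of the leaf space $\r$, i.e. the sign in $P(\gamma)$. If $\ell\equiv1$, then $P(\Gamma)$ is a discrete group of translations, so $\cong\z$, and $N:=\Gamma\cap\ker P$ is cocompact in a leaf $K$. I would show $N=\Lambda_0$, so the fibre stays a torus: an element of $N$ with linear part $\rho^2$ fixes an entire $z$-line and is excluded by freeness, while a glide reflection with linear part $r_Y$ or $r_X$ would have a $+1$-eigenline spanned by a lattice vector of $\Lambda_0$, hence a rational coordinate axis, contradicting the irrationality just recorded. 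Thus $L$ is again a suspension of a hyperbolic map in $\glz$, whose determinant is $\pm1$ according to the linear part of the monodromy. If instead $\ell$ is onto, I set $\Gamma^+:=\ker\ell$; the previous case applies to $\Gamma^+$, so $L^+:=\Gamma^+\backslash\sol$ is a suspension of a hyperbolic torus diffeomorphism, and the nontrivial class of $\Gamma/\Gamma^+\cong\z/2\z$ acts on $L^+$ as a free involutive isometry inducing a reflection on the base circle --- precisely the data defining a sapphire. This exhausts the possibilities.

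The main obstacle is the step ruling out Klein-bottle fibres when the base orientation is preserved: one must turn the hyperbolicity of the monodromy into the impossibility of a fibrewise glide reflection. The clean way is the eigendirection argument above --- an integral involution preserving $\Lambda_0$ has rational eigenlines, whereas a hyperbolic element of $\slz$ has irrational ones, so the two cannot share the coordinate axes. A secondary technical point, already handled by the conjugation argument of Lemma~\ref{lem.solvar}, is the discreteness of $P(\Gamma_1)$, which underlies the whole reduction to integral hyperbolic matrices.
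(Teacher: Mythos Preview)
Your argument is correct and follows the same overall skeleton as the paper's proof --- pass to a finite-index sublattice that yields a hyperbolic torus suspension, then analyze the residual finite quotient --- but the execution is organized differently and is more self-contained. The paper takes $\Lambda_0=\ker(P|_\Lambda)$, notes that its translation subgroup has index at most two and rank two by citing Scott, picks any $l$ with $P(l)$ orientation-preserving, and then defers the entire endgame (including the possibility of glide reflections in $\Lambda_0$ and the various finite extensions) to Lemma~\ref{lem.solvar} and the discussion following it. You instead first isolate $\Gamma_1=\Gamma\cap\sol$, establish directly that it is a suspension lattice (recovering the rank-two and discreteness facts without the external reference), and then do an explicit dichotomy on the orientation character $\ell$. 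Your eigendirection argument --- that a hyperbolic element of $\slz$ preserving $\Lambda_0$ forbids nonzero lattice vectors along $e_1$ or $e_2$, hence forbids fibrewise glide reflections --- is an elementary replacement for the paper's appeal to Lemma~\ref{lem.solvar} in ruling out Klein-bottle fibres; it makes that step transparent where the paper's proof leaves it implicit. The trade-off is that the paper's route, by leaning on Lemma~\ref{lem.solvar}, handles all finite extensions uniformly in one stroke, whereas yours spells out the two cases separately; both arrive at the same classification.
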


\begin{proof}
By definition, sapphires are closed $\sol$-manifolds while suspensions of hyperbolic diffeomorphisms of the torus are $\sol$ by Lemma~\ref{lem.quotient}.

Conversely, let $\Lambda \subset \isom(\sol)$ be a cocompact discrete subgroup acting wihout fixed point on $\sol$. Let $\Lambda_0$ be the kernel of the restriction of $P$ to $\Lambda$. 
An element of $\Lambda_0$ writes $gh$ where $g$ is a translation of vector $\alpha e_1 + \beta e_2 \in \sol$, $\alpha,\beta \in \r$ and $h \in \left\{id,r_X,r_Y\right\}$, since $\Lambda_0$ acts without fixed point and preserves all the leaves of $\mathcal{F}$. 
The subgroup of translations of $\Lambda_0$ is of index at most $2$ in $\Lambda_0$ and is necessarily of rank $2$, see for example~\cite[Theorem~4.17]{scott}.

Let $id \ne l\in \Lambda$ be such that $P(l)$ preserves the orientation of $B$. 
The quotient of $\sol$ by the subgroup generated by $l$ and the translations of $\Lambda_0$ is a torus bundle over the circle with hyperbolic monodromy. 
The result  then follows from Lemma~\ref{lem.solvar}.
\end{proof}

\subsubsection{Closed geodesics}\label{subsec.closedgeodes}

Let $L = \raisebox{-.65ex}{\ensuremath{\Lambda}}\!\backslash \sol$ be a closed $\sol$-manifold given by Theorem~\ref{thm.classif}. 
The lattice $\Lambda$ satisfies the exact sequence  $0\to \Lambda_0 \longrightarrow \Lambda \stackrel{P_L}{\longrightarrow} \Lambda/\!\raisebox{-.65ex}{\ensuremath{\Lambda_0}} \to 0$, where $\Lambda_0 \subset K$. 
We denote by $p \colon L \longrightarrow B$ the associated map, where $B = \r/\!\raisebox{-.65ex}{\ensuremath{P_L(\Lambda)}}$ is homeomorphic to an interval if $L$ is a sapphire and to the circle otherwise. 
Any periodic geodesic $\gamma \colon \r \longrightarrow L$ has a lift which is a geodesic $\widetilde\gamma \colon \r\longrightarrow \sol$. 
We will say that $\gamma$ is of type $A$, $B$, or $C$ if $\widetilde\gamma$ is of type $A$, $B$, or $C$ in the sense of \cite{tro}. 
Closed geodesics of type $A$ of $L$ are in particular quotients of  geodesics of type $A$ of $\sol$ directed by elements of $\Lambda_0$. 
These geodesics are contained in the fibers of $p$ and then belong only to a dense countable subset of such fibers.

\begin{lem}\label{lem.closed}
Let $L$ be a closed $\sol$-manifold. Then, any closed geodesic of type $C$ of $L$ is homotopic to a closed geodesic of type $A$ of $L$. 
Furthermore, closed geodesics of type $B$ of $L$ are quotients of geodesics of type $B$ of $\sol$ directed by $e_3$, that is 
intersection of hyperbolic leaves of $\mathcal{H'}$ and $\mathcal{H''}$ in $\sol$. 
\end{lem}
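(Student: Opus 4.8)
The plan is to translate the statement into the language of deck transformations of closed geodesics and to exploit the behaviour of the $z$-coordinate along a lift. Since $\sol$ is diffeomorphic to $\r^3$, the manifold $L=\raisebox{-.65ex}{\ensuremath{\Lambda}}\!\backslash \sol$ is aspherical, so free homotopy classes of loops in $L$ correspond to conjugacy classes in $\Lambda$. A closed geodesic $\gamma$ lifts to a geodesic $\widetilde\gamma\colon\r\to\sol$, and there is a nontrivial $g\in\Lambda$ with $\widetilde\gamma(t+T)=g\cdot\widetilde\gamma(t)$, where $T$ is the period. The isometry $g$ acts on the leaf space $\r$ of $\mathcal F$ through $P(g)$, that is on the $z$-coordinate by $z\mapsto z+\mu$ (if $P(g)$ preserves the orientation of $B$, in particular if $g\in\Lambda_0$, in which case $\mu=0$) or by $z\mapsto\theta-z$ (if $P(g)$ is a reflection, the sapphire case). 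I would settle the type $B$ statement first and then the type $C$ one.

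For type $B$, recall from \cite{tro} (see the proof of Proposition~\ref{prop.index.B}) that a geodesic of type $B$ contained in a leaf of $\mathcal{H'}$ is, up to an isometry, one of the curves $\gamma_a$: when $a=0$ it is directed by $e_3$ and its $z$-coordinate is strictly monotone, whereas when $a\neq 0$ it is a genuine hyperbolic geodesic whose $z$-coordinate is bounded above, attains a maximum, and tends to $-\infty$ at both ends. If such a geodesic is closed, the function $t\mapsto z(\widetilde\gamma(t))$ must satisfy $z(\widetilde\gamma(t+T))=z(\widetilde\gamma(t))+\mu$ or $z(\widetilde\gamma(t+T))=\theta-z(\widetilde\gamma(t))$. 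Comparing suprema over $t$ rules out the first relation when $a\neq0$ (it forces $\mu=0$ and then $T$-periodicity of a non-periodic function), while the second is impossible since it would force the finite supremum of $z\circ\widetilde\gamma$ to equal $\theta$ minus its infimum $-\infty$. Hence $a=0$, so $\widetilde\gamma$ is directed by $e_3$ and is the intersection of the leaves of $\mathcal{H'}$ and $\mathcal{H''}$ containing it; the same argument applies in $\mathcal{H''}$.

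For type $C$, I would use the boundedness of the $z$-coordinate along a geodesic of type $C$. The same comparison of suprema shows that $\mu=0$, so in the torus bundle case $g$ lies in $\Lambda_0$ and is a horizontal translation by a vector $w=\alpha e_1+\beta e_2\in\Lambda_0$. Since the eigendirections $e_1,e_2$ of the hyperbolic monodromy have irrational slope, $\alpha\beta\neq 0$, and the geodesic of type $A$ at the height $z_0=\tfrac12\ln|\alpha/\beta|$ directed by $f_2$ (if $\alpha\beta>0$) or $f_1$ (if $\alpha\beta<0$) is exactly the orbit of the left translation by $w$; it therefore descends to a closed geodesic $\delta$ of type $A$ in $L$ with deck transformation $w=g$. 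Equivalently, $\widetilde\gamma$ lies on the cylinder whose axis is this geodesic $\widetilde\delta$ of type $A$; the isometry $g$ preserves $\widetilde\gamma$, hence the cylinder and its axis, so that the radial contraction of the cylinder onto $\widetilde\delta$ is $g$-equivariant and descends to a free homotopy from $\gamma$ to $\delta$ in $L$. Either way $\gamma$ and $\delta$ have conjugate deck transformations and are freely homotopic.

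The main obstacle is the sapphire case, where $P(g)$ may be a reflection and $g\notin\Lambda_0$: one must check whether such a $g$ can arise for a closed geodesic of type $C$ and, if it does, that $g$ nonetheless preserves the type $A$ axis of the cylinder and translates it, so that the axis descends to a closed geodesic of type $A$ and the equivariant contraction still goes through. I expect to handle this either by passing to the torus bundle double cover produced by Lemma~\ref{lem.solvar}, or by analysing directly the action on the $z$-coordinate of the lift $k(x,y,z)=\left(\eta_1e^\theta y+\alpha,\eta_2e^{-\theta}x+\beta,\theta-z\right)$ of a reflection. A secondary point to verify is that the axial drift of a closed geodesic of type $C$ is nonzero, so that the resulting geodesic $\delta$ of type $A$ is genuinely nonconstant.
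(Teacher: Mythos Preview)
Your argument is correct and genuinely different from the paper's. For type $C$ the paper works entirely with Troyanov's explicit parametrisations: it observes that $\tilde z$ is $T$-periodic, forces $T$ to be an integer multiple of $4K/\mu$, and then reads off from the closed formulae that
\[
\widetilde\gamma(t+T)-\widetilde\gamma(t)=2\bigl(L\mu\sqrt{|ab|}\bigr)T\,(\pm X\pm Y),
\]
so the deck transformation $l_0$ is a specific element of $\Lambda_0$ and the associated type $A$ geodesic is the one it defines. For type $B$ the paper lifts to the infinite cyclic cover and uses that the hyperbolic map $A$ forces the horizontal component to sit at a fixed point. You replace both computations by a qualitative analysis of the $z$-coordinate and the cylinder geometry, which is cleaner and more geometric.

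Your two residual worries dissolve by the same mechanism. Since $g\in\Lambda$ sends the lift $\widetilde\gamma$ to itself, it preserves the cylinder and hence its type $A$ axis $\widetilde\delta$; being an isometry of a line, $g$ acts on $\widetilde\delta$ as a translation or a reflection, and a reflection would have a fixed point, contradicting that $\Lambda$ acts freely. So even in the sapphire case $g$ translates $\widetilde\delta$ nontrivially, the axis descends to a closed type $A$ geodesic, and the radial contraction is $g$-equivariant. This also settles the ``axial drift'' issue: the drift is exactly the translation length of $g$ on $\widetilde\delta$, which is nonzero. (The paper, incidentally, simply asserts $l_0\in\Lambda_0$ without comment, so it is no more explicit on this point than you are.)

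The one thing you lose is quantitative. The paper's explicit formula yields the estimate recorded in Remark~\ref{rem.estimate}: the type $A$ geodesic homotopic to a closed type $C$ geodesic has length a multiple of $8LK\sqrt{|ab|}\ge 4-\pi$. That bound is the entire engine of Proposition~\ref{prop.metric.choice}, which is in turn essential for the main theorem. Your argument proves the lemma as stated but does not supply this estimate, so if you want to carry your approach forward you would still need to extract a uniform lower bound on the axial translation length, and at that point you are back to Troyanov's formulae.
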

 
\begin{proof}
Let $\gamma \colon \r \longrightarrow L$ be a periodic geodesic of type $C$ and let 
$\widetilde\gamma \colon t\in \r\longrightarrow \left(\tilde x(t),\tilde y(t),\tilde z(t)\right) \in\sol$ be a lift of $\gamma$.
There exists $l_0 \in \Lambda_0$ such that for every $t\in\r$, $\widetilde \gamma (t + T) = l_0\cdot \widetilde \gamma(t)$, where
$T$ is the minimal period of $\gamma$. 
In particular, the coordinate $\tilde z$ of $\widetilde \gamma$ is $T$-periodic, by \cite[\S4.4]{tro}. This forces $T$ to be a multiple of 
$\frac {4K}{\mu}$, where $K$ and $\mu$ are the quantities introduced in \cite{tro}. 
We deduce from the equation of geodesics of type $C$ obtained in \cite[\S4.4]{tro} the relation 
$$
\widetilde \gamma(t + T) - \widetilde \gamma(t) = 2\left(L\mu\sqrt{\vert ab\vert}\right)T \left(\pm X\pm Y\right)\;.
$$
Hence, writing $T = n \left(\frac {4K}{\mu}\right)$, $n\in\n^*$, we deduce that
$$
\left(8nLK\sqrt{\vert ab\vert}\right)\left(\pm X\pm Y\right) = l_0\;,
$$
so that the closed geodesic $\gamma$ is homotopic to the closed geodesic of type $A$ of $L$ defined by $l_0$. The latter's length is a multiple of the quantity $8LK\sqrt{\vert ab\vert}$, with the notations of \cite{tro}. Likewise, let $\gamma \colon \r \longrightarrow L$ be a periodic geodesic of type $B$, of minimal period $T$, and let 
$$
\widetilde\gamma \colon t\in \r\longrightarrow \left(\widetilde\gamma_h(t),\widetilde\gamma_v(t)\right) \in\sol/\!\raisebox{-.65ex}{\ensuremath{\Lambda_0}} = \left(K/\!\raisebox{-.65ex}{\ensuremath{\Lambda_0}}   \rtimes \r \right)
$$ 
be a lift of $\gamma$ to the infinite cyclic covering of $L$. There exists $l \in \Lambda/\!\raisebox{-.65ex}{\ensuremath{\Lambda_0}}$ of infinite order such that for all $t\in\r$,  $\widetilde \gamma (t + T) = l\cdot \widetilde \gamma(t)$. The action of $l$ on the torus $K/\!\raisebox{-.65ex}{\ensuremath{\Lambda_0}}$ is defined by a hyperbolic linear map $A$. We deduce that for all $t\in\r$, $\left(A - I\right)\left(\widetilde\gamma_h(t)\right) = 0$. Hence, $\widetilde\gamma_h$ is necessarily constant and equal to a fixed point of $A$.
\end{proof}

\begin{rem}\label{rem.estimate}
The proof of Lemma~\ref{lem.closed} provides an estimate of the length of closed geodesics of type $A$ homotopic to closed geodesics of type $C$. This estimate will be crucial in the proof of Proposition~\ref{prop.metric.choice}. 
Likewise, if $L$ is the suspension of a diffeomorphism of the torus defined by a hyperbolic  linear map $A \in \glz$, we deduce that closed geodesics of type $B$ of $L$ are in correspondence with the periodic points of $A \colon {\r^2}/{\z^2} \longrightarrow {\r^2}/{\z^2}$.
\end{rem}

\begin{prop}\label{prop.metric.choice}
Let $L$ be a closed three-dimensional manifold given by Theorem~\ref{thm.classif} and let $\Pi$ be a finite subset of homotopy classes of $L$. 
There exists a $\sol$-metric on $L$ such that no element of $\Pi$ gets realized by a closed geodesic of type $C$ of $L$. 
Furthermore, this metric can be chosen such that closed geodesics of type $A$ of $L$ homotopic to elements of $\Pi$ are of Morse-Bott index~$1$.
\end{prop}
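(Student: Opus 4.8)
The plan is to use the freedom in the choice of a $\sol$-metric on $L$. By Theorem~\ref{thm.classif} write $L=\raisebox{-.65ex}{\ensuremath{\Lambda}}\!\backslash\sol$; for the fixed diffeomorphism type the $\sol$-metrics are obtained by deforming the lattice $\Lambda_0\subset K$ through the automorphisms of $\sol$ that scale the two eigendirections $X$ and $Y$ of the monodromy, together with the global homotheties. The first point is that, by Lemma~\ref{lem.closed}, a closed geodesic of type $C$ is homotopic to a closed geodesic of type $A$, and both arise from an element $l_0\in\Lambda_0$ lying on one of the two diagonal lines $\r(X\pm Y)$. Hence among the finitely many classes of $\Pi$, only those represented by such a diagonal element, for the metric at hand, can be realized by a geodesic of type $A$ or $C$; the deformation above lets me prescribe which classes are diagonal and, along each diagonal, the length of the corresponding type $A$ geodesic.

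Next I would read off the Morse--Bott index of a type $A$ geodesic of length $T$ from Proposition~\ref{prop.index.A}. The linearized flow on $\xi$ splits as the parabolic block $\left(\begin{smallmatrix}1&t\\0&1\end{smallmatrix}\right)$ on $\langle h_1,h_2\rangle$, which produces no conjugate point since the leaves of $\mathcal F$ are flat in that direction, and the elliptic block of frequency $\sqrt2$ on $\langle h_3,h_4\rangle$, whose Jacobi fields vanishing at the origin vanish again exactly at the times $t=m\pi/\sqrt2$, $m\in\n^*$. By the Morse index theorem the index is the number of such times in $(0,T)$, so it equals $1$ precisely when $T\in\bigl(\tfrac{\pi}{\sqrt2},\tfrac{2\pi}{\sqrt2}\bigr)$.

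It then remains to discard geodesics of type $C$. By Remark~\ref{rem.estimate} a closed geodesic of type $C$ homotopic to the type $A$ geodesic defined by $l_0$ forces $\norm{l_0}=8nLK\sqrt{\vert ab\vert}$ for some $n\in\n^*$, with the notations of \cite{tro}. I would estimate the winding displacement $8LK\sqrt{\vert ab\vert}$ over all admissible moduli of the Jacobi elliptic functions and show that, relatively to the conjugate-point spacing $\pi/\sqrt2$ (both scaling the same way under homotheties), its infimum is large enough that a suitable sub-window of the index-$1$ range lies strictly below it. Fixing the scale so that this holds and then choosing the eigendirection deformation so that each diagonal class of $\Pi$ has its type $A$ length inside that sub-window, I obtain a $\sol$-metric for which every type $A$ geodesic homotopic to an element of $\Pi$ has index $1$ and no class of $\Pi$ is realized by a geodesic of type $C$; the classes of $\Pi$ that are not diagonal carry neither a type $A$ nor a type $C$ geodesic, so they are harmless.

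The main obstacle is the third step: extracting from the elliptic-integral description of \cite{tro} an explicit lower bound for the winding displacement $8LK\sqrt{\vert ab\vert}$ and comparing it with the conjugate-point spacing $\pi/\sqrt2$ of type $A$ geodesics. A further technical point is the simultaneous placement of the finitely many relevant lengths in the window: since iterates of a diagonal class share its diagonal and have proportional lengths, one must reduce to primitive classes and check that the non-primitive ones either leave $\Pi$ or still fall below the type $C$ threshold.
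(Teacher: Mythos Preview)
Your proposal contains two genuine gaps.

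First, the claim that a closed geodesic of type $A$ (or $C$) arises only from a lattice element $l_0\in\Lambda_0$ lying on a \emph{fixed} diagonal $\r(e_1\pm e_2)$ is incorrect. The vector fields $X=e^z\partial_x$ and $Y=e^{-z}\partial_y$ depend on the height $z$, so the ``diagonal'' direction $X\pm Y$ rotates with the leaf. Concretely, for any $l_0=(\alpha,\beta)\in\Lambda_0$ with $\alpha\beta\neq 0$ (which is automatic since $\Lambda_0$ has no element on an eigenaxis) there is a height $z_0$ with $e^{2z_0}=\alpha/\beta$ at which $l_0$ is parallel to $X+Y$; the corresponding line in that leaf closes up to a type~$A$ geodesic of length $\sqrt{2|\alpha\beta|}$. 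Thus \emph{every} class in $\Lambda_0$ is represented by a type~$A$ geodesic, and ``prescribing which classes are diagonal'' via eigendirection rescaling is not a meaningful operation. Moreover, since the length $\sqrt{2|\alpha\beta|}$ depends only on the product of the two scale factors, the independent scaling gives no more control over lengths than a global homothety.

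Second, your Morse--Bott index computation is for the wrong variational problem. Counting conjugate points in $(0,T)$ gives the Morse index of the \emph{fixed-endpoint} energy; what is needed is the index of the closed geodesic in the free loop space, which equals the Conley--Zehnder index of the linearized return map (this is the identification cited from \cite{V,EGH}). For the rotation block of angle $\sqrt{2}\,T$ this CZ index is $1$ throughout $0<T<2\pi/\sqrt{2}$, not only on $(\pi/\sqrt{2},2\pi/\sqrt{2})$; the unipotent block contributes $0$ after the Bott perturbation. With the correct window $(0,2\pi/\sqrt{2})$ the iterate difficulty you flag disappears.

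The paper's argument is then short: scale $\Lambda_0$ by a small $\varepsilon$ so that every class of $\Pi\cap\Lambda_0$ has type~$A$ length below $4-\pi$. The quantity $8LK\sqrt{|ab|}$ attached to a type~$C$ geodesic depends only on the elliptic modulus $k\in[0,1]$ (not on the lattice), and the estimates $E\ge 1$, $K\sqrt{1-k^2}\le\pi/2$ give the uniform lower bound $8LK\sqrt{|ab|}\ge 4-\pi$. Hence no type~$C$ geodesic can be homotopic to an element of~$\Pi$, and since $4-\pi<2\pi/\sqrt{2}$ the index condition follows at once. The elliptic-integral bound you singled out as the main obstacle is exactly this inequality.
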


\begin{proof}
From Theorem~\ref{thm.classif}, the $\sol$-manifold $L$ is diffeomorphic to the quotient of $\sol$ by a lattice $\Lambda \subset \isom(\sol)$ satisfying the exact sequence $0\to \Lambda_0 \longrightarrow \Lambda \stackrel{P_L}{\longrightarrow} \Lambda/\!\raisebox{-.65ex}{\ensuremath{\Lambda_0}} \to 0$ where $\Lambda_0 \subset K$ is a lattice. 
The fundamental group of $L$ is therefore isomorphic to $\Lambda$, and from Lemma~\ref{lem.closed}, only classes in $\Pi\cap \Lambda_0$ can be realized by closed geodesics of type $A$ or $C$. 
Up to multiplication of the lattice $\Lambda_0$ by  a constant $0<\varepsilon \ll 1$, we can assume that all the elements of $\Pi\cap \Lambda_0$ have length bounded from above by $4 - \pi$. Such a $\sol$-metric fits. 
Indeed, from Lemma~\ref{lem.closed} and Remark~\ref{rem.estimate}, every closed geodesic of type $C$ of $L$ is homotopic to a closed geodesic of type $A$ of length a multiple of $8LK\sqrt{\vert ab\vert}$, adopting the notations of \cite{tro}. 
Now, taking again the notations of \cite{tro}, we get
$$
8LK\sqrt{\vert ab\vert} = \frac{8}{\sqrt 2 \sqrt{1 + k^2}}\left(E - \frac K2(1 - k^2)\right)\textrm{ where $0\leq k \leq 1$.}
$$
Moreover, $E = \displaystyle\int^\frac{\pi}2_0\sqrt{1 - k^2\sin^2\theta}\,\mathrm d \theta \geq 1$ and $$
K\sqrt{1 - k^2} = \displaystyle\int^\frac{\pi}2_0\sqrt{\frac{1 - k^2}{1 - k^2\sin^2\theta}}\,\mathrm d \theta \leq \frac\pi2\;.
$$
We get the estimate $8LK\sqrt{\vert ab\vert} \geq 4 - \pi$ which prevents the geodesic of type $C$ to be homotopic to an element of $\Pi$. 
Likewise, the length of closed geodesics of type $A$ homotopic to elements of $\Pi$ are less than $4 - \pi < \frac{2\pi}{\sqrt{2}}$. From Proposition~\ref{prop.index.A}, the Conley-Zehnder index of these geodesics in the trivialisation $(h_1,\dots,h_4)$ of $\xi$ is $1$. 
Indeed, the Conley-Zehnder index of the rotation block is $1$ by definition, while the (Bott-)Conley-Zehnder index of the unipotent block 
$
U = 
\left[
\begin{matrix}
	1 & t 	\cr
	0 & 1 \cr
\end{matrix}
\right]
$
vanishes, see the thesis of F.~Bourgeois. Indeed, this block is solution of the differential equation $\dot U = S\mathcal{J}U$ with $U(0) = I$, 
$S = 
\left[
\begin{matrix}
	1 & 0 	\cr
	0 & 0 \cr
\end{matrix}
\right]$
and
$\mathcal{J} = 
\left[
\begin{matrix}
	0 & 1 	\cr
	-1 & 0 \cr
\end{matrix}
\right]$. By definition, the (Bott-)Conley-Zehnder index of this block is the Conley-Zehnder index of the solution of the differential equation  $\dot V = \left( S - \delta I\right)\mathcal{J} V$ with $V(0) = I$ and $0<\delta \ll 1$, which is hyperbolic. The result follows from \cite[Theorem~3.1]{V}, \cite[Proposition~1.7.3]{EGH} which identifies this Conley-Zehnder index to the Morse-Bott index.
\end{proof}

The $\sol$-metrics given by Proposition~\ref{prop.metric.choice} are metrics for which the area of the fibers of the map $p \colon L \longrightarrow B$ is not too large compared to the length of $B$. 
In fact, without changing the length of $B$, it is possible to expand or contract the fibers of $p$ as much as we want, keeping the $\sol$ feature of the metric. This observation was crucial in the proof of Proposition~\ref{prop.metric.choice} and will be very useful in Section~2.

\section{$\sol$ Lagrangian submanifolds in uniruled symplectic manifolds}

\subsection{Statement of the results}

\begin{dfn}\label{dfn.uniruled}
We say that a closed symplectic manifold $(X,\omega)$ is uniruled iff it has a non vanishing
genus $0$  Gromov-Witten invariant of  the form
$
\langle [pt]_k;[pt],\omega^k \rangle^X_A\;,
$
where $A \in H_2(X;\z)$, $k\geq 2$, and $[pt]_k$ represents the Poincar\'e dual of the point class in the moduli space $\overline{\mathcal{M}}_{0,k+1}$ of  genus $0$ stable curves with $k+1$ marked points.
\end{dfn}

This Definition~\ref{dfn.uniruled} differs from \cite[Definition~4.5]{hu-li-ruan} where $\omega^k$ is replaced by any finite set of differential forms on $X$. Nevertheless, from \cite[Theorem~4.2.10]{kollar.sp.uni}, complex projective uniruled manifolds are all symplectically uniruled in the sense of Definition~\ref{dfn.uniruled}. The advantage for us to restrict ourselves to Definition~\ref{dfn.uniruled}
is that for every Lagrangian submanifold $L$ of $X$, the form $\omega$ has a Poincar\'e dual representative disjoint from $L$. 

Our goal is to prove the following results.

\begin{thm}\label{thm.main}
Let $(X,\omega)$ be a closed uniruled  symplectic manifold of dimension six. For any Lagrangian submanifold $L$ of $X$ homeomorphic to the suspension of a hyperbolic diffeomorphism of the two-dimensional torus, there exists a symplectic disc of Maslov index zero with boundary on $L$. Furthermore, such a disc can be chosen such that its boundary does not vanish in $H_1(L;\q)$.
\end{thm}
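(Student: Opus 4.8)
The plan is to use symplectic field theory (SFT), following the strategy of Eliashberg's proof that uniruled symplectic manifolds of dimension $>4$ cannot contain negatively curved Lagrangians. The Lagrangian $L$ being a $\sol$ manifold, its unit cotangent bundle $S^*L$ carries a natural contact structure, and a Weinstein neighborhood of $L$ in $(X,\omega)$ identifies a neighborhood of the zero section in $T^*L$ with a neighborhood of $L$ in $X$. Removing this neighborhood and completing, one studies $J$-holomorphic curves in the symplectization and in the completed complement, whose negative ends are asymptotic to closed Reeb orbits of $S^*L$ — which are precisely the closed geodesics of $L$ for the chosen $\sol$-metric. The uniruledness hypothesis provides, via the non-vanishing Gromov-Witten invariant of Definition~\ref{dfn.uniruled}, a rational curve through a generic point; after a neck-stretching (splitting) argument along $S^*L$, this produces a punctured holomorphic curve in the complement with negative ends asymptotic to closed geodesics of $L$.

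The heart of the matter is an index/dimension count. The relevant dimension formula for such punctured curves involves the Conley-Zehnder indices of the asymptotic Reeb orbits, and this is exactly why the first half of the paper computes the linearized geodesic flow in Propositions~\ref{prop.index.A} and~\ref{prop.index.B}. Using Proposition~\ref{prop.metric.choice}, one chooses the $\sol$-metric so that the only closed geodesics of controlled (small) length homotopic to relevant classes are of type $A$ with Morse-Bott index $1$, while geodesics of type $B$ are excluded by their growth and geodesics of type $C$ are made homotopic to type $A$ ones of length bounded below by $4-\pi$. This index control forces the surviving punctured curve to have a single negative end asymptotic to a type-$A$ geodesic, and the unipotent block of Proposition~\ref{prop.index.A} — whose $(\mathrm{Bott}\text{-})$Conley-Zehnder index vanishes — is what yields \emph{Maslov index zero} for the resulting disc after capping off the cylindrical end in $T^*L$, since the Maslov index of the capped disc is computed from the Conley-Zehnder index of its boundary asymptotic.

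To obtain an honest \emph{disc} with boundary on $L$ rather than merely a punctured sphere with cylindrical ends, I would cap each negative cylindrical end asymptotic to a closed geodesic $\gamma$ of type $A$ by the obvious half-cylinder in $T^*L$ sweeping $\gamma$ down to the zero section; because $\gamma$ is a geodesic of type $A$, it is directed by an element of $\Lambda_0$ and lies in a single fiber of $p\colon L\to B$, so the capping region stays within the Weinstein neighborhood and the total Maslov index remains zero. The resulting disc is symplectic after a small perturbation. Finally, for the \emph{non-triviality in $H_1(L;\q)$} of the boundary, the key point is that a type-$A$ closed geodesic is directed by a nonzero element of $\Lambda_0$, which by the homological computation surrounding Lemma~\ref{lem.homol} injects into $H_1(L;\q)\cong\q\oplus(\q^2/(A-I)(\q^2))$; here one exploits that $(A-I)$ is invertible over $\q$ when $A$ is hyperbolic, so $\tors H_1(L;\z)\cong\Lambda_0/(A-I)(\Lambda_0)$ is finite and the free part of the $\Lambda_0$-direction survives rationally.

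\textbf{The main obstacle} I anticipate is the transversality and compactness machinery of SFT needed to guarantee that the neck-stretched limit of the uniruling rational curve genuinely contains a component with a single negative end of the required type, rather than degenerating into a building whose intermediate levels absorb the homology or whose ends are multiply covered geodesics of the wrong index. Controlling the Morse-Bott degeneracy of the type-$A$ orbits (they come in the families of fibers noted after Lemma~\ref{lem.closed}) and ruling out configurations asymptotic to type-$B$ or type-$C$ geodesics via the length and index estimates of Proposition~\ref{prop.metric.choice} is where the careful analysis — and the crucial freedom to expand or contract the fibers of $p$ while preserving the $\sol$ metric — must be deployed.
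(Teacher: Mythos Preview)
Your proposal contains a decisive homological error that breaks the argument. You claim that a type-$A$ closed geodesic, being directed by a nonzero element of $\Lambda_0$, is nontrivial in $H_1(L;\q)$. But Lemma~\ref{lem.homol} says exactly the opposite: since $(A-I)$ is invertible over $\q$, the torsion summand $\Lambda_0/(A-I)(\Lambda_0)$ vanishes rationally, so $H_1(L;\q)\cong\q$ is generated by the class of the base circle. Every element of $\Lambda_0$ maps to torsion in $H_1(L;\z)$ and hence to zero in $H_1(L;\q)$. Thus the disc you construct by capping a type-$A$ end has boundary \emph{trivial} in $H_1(L;\q)$ --- precisely what the theorem forbids. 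Relatedly, you try to ``exclude type-$B$ geodesics by their growth'', whereas in the paper they are not excluded at all: they are the geodesics whose homotopy class projects nontrivially to the base, and they are the only candidates for a boundary nontrivial in $H_1(L;\q)$.

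The paper's proof is organized quite differently, as a contradiction. One \emph{assumes} no Maslov-zero disc with rationally nontrivial boundary exists. Under that assumption, Lemma~\ref{lem.nodal.curves} shows any $J_0$-holomorphic curve in the class $(A,p_k)$ is string-like, and the disc of the statement would arise precisely from a plane in $X\setminus U_0$ asymptotic to a \emph{type-$B$} geodesic (via a branched double cover glued to the trivial cylinder in $U_0$); the standing assumption forbids this, forcing all ends except two in a single cylinder of $U_0$ to be of type $B$ and that cylinder to have two type-$A$ ends. The contradiction then comes from a covering argument you do not mention: because type-$A$ geodesics \emph{do} vanish in $H_1(L;\q)$, the relevant cylinders in $U_0$ lift to the infinite cyclic cover $\widetilde U_0$, so the evaluation map $\mathcal{M}^{A,p_k}_{0,k+1}(H,L;J)\to L$ factors through $\widetilde L$ and has degree zero, contradicting uniruledness. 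In short, the roles of the two geodesic types are reversed in your sketch, and the essential lifting/degree argument is absent.
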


In particular, such a Lagrangian submanifold $L \hookrightarrow X$ given by Theorem~\ref{thm.main} cannot be monotone.  It might be true that such Lagrangian submanifolds do not exist at all, see 
\S \ref{rems}. In fact, in the case of the projective space, the absence of orientable Sol Lagrangian submanifolds follows from Theorem 14.1 of \cite{Fuk}. 
Moreover, in this paper Kenji Fukaya remarks that his methods may extend to uniruled manifolds as well. 
Nevertheless, we deduce the following corollaries.

\begin{cor}\label{cor.dp}
Let $p \colon (X,c_X) \to (B,c_B)$ be a  dominant real morphism with rational fibers,  where $(X,c_X)$ (respectively   $(B,c_B)$) is a real algebraic manifold of dimension $3$ (respectively $1$). Then, the real locus of $X$ has no $\sol$ component $L \subset X^{nonsing}$ such that the restriction of $p$ to $L= \raisebox{-.65ex}{\ensuremath{\Lambda}}\!\backslash \sol$ is the map $L \to  \r/\!\raisebox{-.65ex}{\ensuremath{P_L(\Lambda)}}$ defined in \S~\ref{subsec.closedgeodes}.
\end{cor}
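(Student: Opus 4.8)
The plan is to reduce the statement to Theorem~\ref{thm.main} and then exploit the fibration $p$. First I would equip $X$ with a $c_X$-invariant K\"ahler form $\omega$, obtained by averaging a Fubini--Study form over the involution; since then $c_X^*\omega=-\omega$, the real locus $\mathrm{Fix}(c_X)$ is Lagrangian where $X$ is smooth, and in particular so is the component $L\subset X^{nonsing}$. Because the fibers of $p$ are rational they are swept out by rational curves, so $X$ is uniruled; being projective, it is symplectically uniruled in the sense of Definition~\ref{dfn.uniruled} by \cite[Theorem~4.2.10]{kollar.sp.uni}. The hypothesis that $p$ restricts to $L\to\r/P_L(\Lambda)$ identifies the base with a circle, so by Theorem~\ref{thm.classif} the manifold $L$ must be the suspension of a hyperbolic diffeomorphism of the two-torus (a sapphire would fiber over an interval, which cannot occur as a component of the closed real locus of the curve $B$). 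Theorem~\ref{thm.main} then provides a symplectic disc $D$ of Maslov index zero with $\partial D\subset L$ and $[\partial D]\neq 0$ in $H_1(L;\q)$.

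Next I would project by $p$. By Lemma~\ref{lem.homol} the torus fiber of $L\to\r/P_L(\Lambda)$ contributes only torsion to $H_1(L;\z)$, since $(A-I)$ is invertible; hence $H_1(L;\q)\cong\q$ is generated by the base direction and the induced map $p_*\colon H_1(L;\q)\to H_1(\r/P_L(\Lambda);\q)$ is an isomorphism. In particular $p_*[\partial D]\neq 0$, so the boundary loop wraps nontrivially around the circle. Taking the disc to be holomorphic for the integrable complex structure of $X$ (for which $L$ remains Lagrangian and across which the disc doubles holomorphically), the composite $p\circ D$ is a non-constant holomorphic map of the disc into $B$ with boundary in the real locus. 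Reflecting across $\partial D$ by the Schwarz principle yields a non-constant holomorphic sphere $p\circ\widehat D\colon\p^1\to B$, which is impossible as soon as the genus of $B$ is positive. I should note that arranging the disc to be holomorphic for the \emph{algebraic} structure (rather than for the auxiliary almost complex structure furnished by the SFT argument) is a point that needs justification; the robust substitute is the positivity of the intersection of a $J$-holomorphic disc with the fibers over non-real points of $B$, which I would run with an almost complex structure compatible with $p$.

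The remaining, and genuinely delicate, case is the rational base $B\cong\p^1$ with real locus a circle, which is precisely the Del Pezzo fibration situation raised by Koll\'ar. Here I would bring in the Maslov condition. Doubling $D$ across $L=\mathrm{Fix}(c_X)$ produces a holomorphic sphere $\widehat D\colon\p^1\to X$ with $c_1(X)\cdot[\widehat D]=\mu(D)=0$, while $p_*[\widehat D]=N[\p^1]$ with $N\geq 1$ since $p\circ D$ is non-constant. Writing $K_X=K_{X/B}+p^*K_B$ and using $\deg(-K_{\p^1})=2$, the vanishing of $c_1(X)\cdot[\widehat D]$ forces $-K_{X/B}\cdot[\widehat D]=-2N<0$. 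Since $-K_{X/B}$ restricts to the anticanonical class of each rational (Del Pezzo) fiber, its positivity along the fibration ought to contradict this inequality for a multisection such as $\widehat D$. Making this last step rigorous is the main obstacle: it requires controlling the relative anticanonical class of the fibration globally, and it is here that the orientability of the real locus enters, in accordance with the hypothesis flagged in the introduction.
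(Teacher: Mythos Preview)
Your setup through the invocation of Theorem~\ref{thm.main} is fine; the gap is in deriving the contradiction. The paper does not split into cases by the genus of $B$. Instead it performs a real branched cover $B'\to B$, resolves $X\times_p B'$ to a smooth uniruled $Y$, and arranges $B'$ of positive genus so that $p_*\colon H_1(L';\q)\to H_1(B';\q)$ is \emph{injective} (the passage to the fiber product also turns a possible sapphire into an honest torus bundle $L'$). Since that injection factors through $H_1(Y;\q)$, one gets $H_1(L';\q)\hookrightarrow H_1(Y;\q)$. Now the disc $D$ from Theorem~\ref{thm.main} applied to $L'\subset Y$ has $[\partial D]\neq 0$ in $H_1(L';\q)$ but $[\partial D]=0$ in $H_1(Y;\q)$ because it bounds the $2$-chain $D$. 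That is the entire argument---purely homological, with no holomorphicity of $D$, no Schwarz reflection, no Chern-class bookkeeping.

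Both of your cases have problems that this trick dissolves. In positive genus you try to make $D$ holomorphic for the integrable structure in order to reflect and project; Theorem~\ref{thm.main} gives only a symplectic disc (produced for a generic SFT almost-complex structure), and your positivity-of-intersection fallback is not carried through. The homological argument above works without any of this, but it requires the relevant real oval to be nonzero in $H_1$ of the base, which fails for $B=\p^1$ and can fail even in positive genus---hence the branched cover. In genus zero your $-K_{X/B}$ computation is not valid: fiberwise ampleness on Del Pezzo fibers constrains pairings with \emph{fiber} classes, not with a multisection such as $\widehat D$, so there is no reason $-K_{X/B}\cdot[\widehat D]\ge 0$; singular fibers and the allowed singularities of $X$ (only $L\subset X^{nonsing}$ is assumed) make this worse. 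The base change simply removes the genus-zero case and, via the resolution step, the smoothness issue as well.
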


In particular, the restriction of $p$ to $L$ is a submersion if $L$ is the suspension of a hyperbolic diffeomorphism of the torus and has two multiple fibers if $L$ is a sapphire. 
Note that Koll\'ar proved in \cite{koIV} that in the situation of Corollary~\ref{cor.dp}, an orientable $\sol$ component $L$ of $X(\r)$ automatically satisfies the last conditions. That is $L $ is contained in the
nonsingular part $X^{nonsing}$ of $X$ and the restriction of $p$ to $L= \raisebox{-.65ex}{\ensuremath{\Lambda}}\!\backslash \sol$ is the map $L \to  \r/\!\raisebox{-.65ex}{\ensuremath{P_L(\Lambda)}}$.
Corollary~\ref{cor.dp} means that in \cite[Theorems~1.1 and 1.3]{koIV}, the manifold $N$ cannot be endowed with a $\sol$ metric, confirming the expectation of Koll\'ar discussed in Remark~1.4 of this paper. The upshot is that if $X$ is a projective uniruled manifold defined over $\R$ with orientable
real locus, then, up to connected sums with $\R P^3$ or $S^2 \times S^1$ and modulo finitely many
closed three manifolds, every component of $\R X$ is a Seifert fiber space or a connected sum of
Lens spaces.

\begin{proof}[Proof of Corollary~\ref{cor.dp}]
Choosing an appropriate branched covering $ (B',c_{B'})\to (B,c_B)$ and resolving the singularities of the fibered product $X \times_p B'$, we get a nonsingular uniruled real projective variety $Y$ containing in its real locus a connected component $L'$ homeomorphic to the suspension of a hyperbolic diffeomorphism of the torus. In this construction, $B'$ can be obtained of positive genus and such that the projection $p_* \colon H_1(L';\q)\to H_1(B';\q)$ is injective. It follows that $H_1(L';\q)$ injects into $H_1(Y;\q)$ and Theorem~\ref{thm.main} provides the contradiction.
\end{proof}

\begin{cor}\label{cor.fano}
The real locus of a smooth three-dimensional Fano manifold does not contain any connected component homeomorphic to the suspension of a hyperbolic diffeomorphism of the two-dimensional torus. \qed
\end{cor}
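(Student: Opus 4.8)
The plan is to exhibit a putative component $L$ of the real locus as a \emph{monotone} Lagrangian submanifold of a monotone, hence symplectically uniruled, six-manifold, and then to contradict Theorem~\ref{thm.main}, which forbids such a Lagrangian from being monotone.

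First I would fix the real structure. A smooth three-dimensional real Fano manifold $X$ carries an anti-holomorphic involution $\sigma$ whose fixed locus is $X(\r)$; assume for contradiction that one connected component $L$ of $X(\r)$ is homeomorphic to the suspension of a hyperbolic diffeomorphism of the two-torus. Since $X$ is Fano, there is a Kähler form $\omega$ in a class proportional to $c_1(X)$; as $\sigma$ is anti-holomorphic one has $\sigma^*c_1(X) = -c_1(X)$, and $-\sigma^*\omega$ is again Kähler for the same complex structure, so replacing $\omega$ by $\tfrac12(\omega - \sigma^*\omega)$ I may assume $\sigma^*\omega = -\omega$ while keeping $[\omega]$ proportional to $c_1(X)$ with positive factor $\tau$. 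Then $(X,\omega)$ is monotone, and being Fano it is uniruled, hence symplectically uniruled in the sense of Definition~\ref{dfn.uniruled}. Moreover $\sigma$ fixes $L$ pointwise, so $d\sigma = \mathrm{id}$ on $TL$, and $\sigma^*\omega = -\omega$ forces $\omega|_{TL} = 0$: being of half dimension, $L$ is Lagrangian.

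Next I would verify that $L$ is monotone. Given a disc $u\colon (D^2,\partial D^2)\to(X,L)$, gluing $u$ to the disc $z\mapsto \sigma(u(\bar z))$ along the common boundary, which lies in the fixed locus $L$, yields a sphere $\bar u$. A standard bookkeeping shows that its Chern number $\langle c_1(X),[\bar u]\rangle$ equals the Maslov index $\mu(u)$, while $\langle[\omega],[\bar u]\rangle = 2\int_{D^2}u^*\omega$, because the orientation-reversing reflection together with $\sigma^*\omega=-\omega$ makes the two halves contribute equally. Combined with $[\omega]=\tau\, c_1(X)$ on spheres, this gives $\int_{D^2}u^*\omega = \tfrac{\tau}{2}\mu(u)$ with $\tau>0$, so $L$ is a monotone Lagrangian.

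Finally, Theorem~\ref{thm.main} applied to the uniruled six-manifold $(X,\omega)$ and the Lagrangian $L$ produces a symplectic disc $u$ with $\mu(u)=0$ and $[u(\partial D^2)]\neq 0$ in $H_1(L;\q)$. A symplectic disc has strictly positive area, whereas monotonicity with $\mu(u)=0$ gives $\int_{D^2}u^*\omega = 0$; this contradiction shows that no such component $L$ exists. I expect the only genuinely delicate point to be the doubling argument of the previous paragraph, namely the precise identification of $\mu(u)$ with the Chern number of the $\sigma$-double and of the area doubling, everything else being formal once $\omega$ is chosen anti-invariant and proportional to $c_1(X)$.
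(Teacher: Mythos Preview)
Your proof is correct and follows exactly the approach the paper indicates: the paper's entire argument is the single line ``Indeed, in the situation of Corollary~\ref{cor.fano}, the real locus would be monotone,'' and you have simply unpacked what that sentence means (anti-invariant K\"ahler form in the class $c_1$, Lagrangian real locus, the doubling argument giving monotonicity, and the contradiction with the Maslov-zero symplectic disc produced by Theorem~\ref{thm.main}). There is nothing to add or correct.
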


Indeed, in the situation of Corollary~\ref{cor.fano}, the real locus would be monotone. Finally, we deduce.

\begin{cor}\label{cor.torus}
A Hamiltonian diffeomorphism of a uniruled symplectic four-manifold which preserves some Lagrangian torus $T$ cannot restrict to
a hyperbolic diffeomorphism of $T$. 
\end{cor}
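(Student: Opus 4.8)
The plan is to deduce the statement from Theorem~\ref{thm.main} by manufacturing, out of the four-manifold $(X,\omega)$ and the Hamiltonian diffeomorphism $\phi$, a closed uniruled symplectic six-manifold that contains the suspension of $\phi_{|T}$ as a Lagrangian submanifold. Write $A\in\glz$ for the action of the hyperbolic map $\phi_{|T}$ on $H_1(T;\z)=\z^2$, so that $A$ has eigenvalues different from $\pm1$ and $A-I$ is invertible. First I would form the symplectic mapping torus $X_\phi=(X\times[0,1])/((x,1)\sim(\phi(x),0))$, a bundle over $S^1_\theta$. Since $\phi^*\omega=\omega$, the pullback of $\omega$ descends to a closed two-form $\tilde\omega$ on $X_\phi$ which restricts to $\omega$ on the fibers and satisfies $\iota_{\partial_\theta}\tilde\omega=0$. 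On $Y:=X_\phi\times S^1_s$ I then set $\Omega:=\tilde\omega+C\,\mathrm d\theta\wedge\mathrm d s$; a direct computation gives $\Omega^3=3C\,\omega^2\wedge\mathrm d\theta\wedge\mathrm d s\neq0$, so $\Omega$ is symplectic for every $C>0$. Inside $Y$ sits $L:=T_\phi\times\{s_0\}$, the mapping torus of $\phi_{|T}$, homeomorphic to the suspension of the hyperbolic diffeomorphism $A$ and hence a closed three-manifold of the type handled by Theorem~\ref{thm.main}. Since $\mathrm d s$ and $\iota_{\partial_\theta}\tilde\omega$ both vanish along $L$ and $\omega_{|T}=0$, one checks $\Omega_{|L}=0$; as $\dim L=3=\tfrac12\dim Y$, the submanifold $L$ is Lagrangian.

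The crucial point is that $Y$ is uniruled. Because $\phi$ is Hamiltonian it is isotopic to the identity through symplectomorphisms, and running the construction over such an isotopy produces a deformation of $\Omega$ to the product form on $X_\phi\times S^1\cong X\times T^2$, namely $\omega\oplus(C\,\mathrm d\theta\wedge\mathrm d s)$. As genus-zero Gromov--Witten invariants, and hence uniruledness in the sense of Definition~\ref{dfn.uniruled}, are invariant under deformation of the symplectic form, it suffices to see that $(X\times T^2,\omega\oplus\omega_{T^2})$ is uniruled. Every rational curve in $X\times T^2$ is vertical, since it maps to a point of $T^2$ (because $\pi_2(T^2)=0$), so by the product formula for Gromov--Witten invariants the defining invariant of $X\times T^2$ in the class $A_X\oplus0$ equals the corresponding invariant of $X$ up to a nonzero factor, the extra point constraint in the $T^2$-factor precisely absorbing the one additional complex dimension. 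This is nonzero by hypothesis. I expect this transfer of uniruledness --- establishing the product formula in the exact normalization of Definition~\ref{dfn.uniruled} and matching the virtual dimensions --- to be the main technical obstacle, and the place where the hypothesis that $\phi$ be Hamiltonian (so that $Y$ deforms to a product) is genuinely used; for a merely symplectic $\phi$ the mapping torus need not be uniruled.

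It then remains to extract the contradiction. Theorem~\ref{thm.main} furnishes a symplectic disc $D$ with boundary on $L$ whose class $[\partial D]$ is nonzero in $H_1(L;\q)$. By Lemma~\ref{lem.homol} and the invertibility of $A-I$, the torsion factor $\z^2/(A-I)(\z^2)$ is finite, so $H_1(L;\q)\cong\q$, generated by the suspension circle, and the fibration projection $p\colon L\to S^1_\theta$ induces an isomorphism $p_*\colon H_1(L;\q)\stackrel{\sim}{\longrightarrow}\q$. Hence $\pi_*[\partial D]\neq0$, where $\pi\colon Y=X_\phi\times S^1\to S^1_\theta$ is the bundle projection extending $p$. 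On the other hand $\partial D$ bounds the disc $D$ in $Y$, so $[\partial D]=0$ in $H_1(Y;\q)$ and a fortiori $\pi_*[\partial D]=0$. This contradiction shows that no such $\phi$ can exist, which proves the corollary.
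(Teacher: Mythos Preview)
Your argument is correct and follows the same strategy as the paper's: manufacture a closed uniruled symplectic six-manifold diffeomorphic to $X\times T^2$ containing the suspension of $\phi_{|T}$ as a Lagrangian, then invoke Theorem~\ref{thm.main}. The only difference is packaging: the paper starts from the honest product $X\times B$ (with $B$ an elliptic curve), places $L$ over a meridian as the graph of the Hamiltonian isotopy, and then corrects the symplectic form by subtracting $dH\wedge dt$ and applying Thurston's trick to make $L$ Lagrangian while staying in the deformation class of the product; you instead start from the mapping torus $X_\phi\times S^1$, where $L=T_\phi\times\{s_0\}$ is Lagrangian for free, and deform back to the product to see uniruledness. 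These are inverse viewpoints on the same construction. Your final step---projecting to $S^1_\theta$ to show that the boundary of the Maslov-zero disc would have to be simultaneously nonzero and zero in $H_1(S^1;\q)$---is precisely the contradiction the paper compresses into the single phrase ``Theorem~\ref{thm.main} now provides the contradiction.''
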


Note that in the case of weakly exact Lagrangian submanifolds, a stronger result has recently been obtained by Shengda Hu and
Fran{\cedilla{c}}ois Lalonde in \cite{HuLal}. 

\begin{proof}[Proof of Corollary~\ref{cor.torus}]
If such a Hamiltonian diffeomorphism $\phi$ of a uniruled symplectic four-manifold $X$ would exist, there would exist a path
$\phi_t$, $t \in [ 0,1]$, of Hamiltonian diffeomorphisms of $X$ such that $\phi_t$ equals the identity (resp. $\phi$) for $t$ close
to zero (resp. one). Let us denote by $Y$ the product of $X$ with a genus one curve $B$ and equip this manifold
with the symplectic form $\omega_Y$ obtained as a sum of the pulled back of  $\omega$ with the pulled back of a volume form 
$\omega_B$ on $B$. This symplectic six-manifold $Y$ is also uniruled. 
Indeed, we can equip it with a product almost complex structure for which the projection onto $B$ is a 
J-holomorphic map. 
Fixing a point $x$ in some fiber, all rational J-holomorphic curves passing through $x$ are contained in this fiber. 
Since the index of rational curves in $Y$ is just one plus the index of rational curves in $X$, we deduce that
the Koll\'ar's Gromov-Witten invariant of $X$ equals the Koll\'ar's Gromov-Witten invariant of $Y$ in the corresponding class. 
Since this invariant does not vanish, $Y$ is uniruled in the sense of Definition~\ref{dfn.uniruled}. 

Now, let $U$ be a meridian of $B$ and $L \subset Y$ be the three-manifold defined as the union for $t \in U$
of $L_t = \phi_t (T)$ in $Y_t = X$. Here, we denote by $Y_t$ (resp. $L_t$) the fiber of $Y$ (resp. $L$) over $t \in U$
and identify $U$ with the interval $[ 0,1]$ with glued ends. The manifold $L$ thus defined is diffeomorphic to the suspension
of $\phi$ but is not yet Lagrangian. The restriction of $\omega_Y$ to $L$ coincides with  the pulled back of $\omega$.
The latter on $L$ equals $dH \wedge dt$, where $dt$ is the pulled back volume form of $U$ and $H$ is the time dependent
Hamiltonian function defining $\phi_t$, $t \in [ 0,1]$ (compare \cite{AkSal}). The difference between $\omega_Y$ and the 
globally defined $dH \wedge dt$ gives a closed two-form on $Y$ in the same cohomology class as $\omega_Y$ and
which still coincides with $\omega$ on every fiber of $Y \to B$. From a Theorem of Thurston, this new form becomes symplectic
after adding a big multiple of the pulled back of $\omega_B$, see  \S $6.1$ of \cite{McDSal}. For the latter, $L$ remains
Lagrangian and $Y$ uniruled since it is deformation equivalent to $\omega_Y$. Theorem~\ref{thm.main} now
provides the contradiction.
\end{proof}

The proof of Theorem~\ref{thm.main} uses symplectic field theory and thus is inspired by the proof of \cite[Th.1.7.5]{EGH} (see also \cite{sftVit} and \cite{KhBourbaki}). The strategy is the following: let $A\in H_2(X;\z)$ and $k \geq 2$ be given by Definition~\ref{dfn.uniruled}. We choose $k$ submanifolds $H_1,\dots,H_k$ of codimension $2$ in $X$,  pairwise transversal,  disjoint from $L$, and Poincar\'e dual to $\omega$. We choose also some points $x\in L$ and $p_k \in \mathcal{M}_{0,k+1} \subset \overline{\mathcal{M}}_{0,k+1}$. From a theorem of Weinstein \cite{weinstein} we know that $L$ possesses a neighborhood $U$ bounded by a contact hypersurface $S$ isometric to the unitary cotangent bundle of $L$ for a $\sol$-metric given by Lemma~\ref{lem.quotient}. Let $J$ be a generic almost-complex structure on $X$ singular along $S$ given by symplectic field theory, see \S~\ref{subsec.singular}. From the compactness Theorem  \cite{BEHWZ}, the rational $J$-holomorphic curves counted by the invariant $\langle pt_k;x,H_1,\dots,H_k \rangle^X_A$ are punctured nodal curves, each irreducible component of which is properly embedded either in $U$, or in $X\setminus U$. Moreover, at their punctures, these curves converge to closed Reeb orbits of $S$, which correspond to closed geodesics of type $A$, $B$, or $C$ of $L$, see \S~\ref{subsec.closedgeodes}.

The first step of the proof consists of showing that $S$ and $J$ can be chosen in such a way that only geodesics of type $A$ of Morse-Bott index $1$ and geodesics of type $B$ may appear as limits of such components at their punctures. The manifold $S$ with this property  corresponds to a $\sol$-metric of $L$ for which the fibers have small volume compared to the length of the basis $B$ of $L$. The structure $J$ is singular along a finite number of such hypersurfaces $S_1,\dots,S_N$ for which the volume of the fibers decreases with respect to the length of the base, or close to such a singular structure. In other words, we decompose $(X,\omega)$ into a symplectic cobordism whose pieces are $U=U_0$, $X\setminus U_N$, and $U_i\setminus U_{i-1}$ for $1 \leq i \leq N$, where $U_i$ are the Weinstein neighborhoods of $L$ with boundary $S_i$. 

      
\begin{figure}[ht]
\centering
\includegraphics[height =3.2cm]{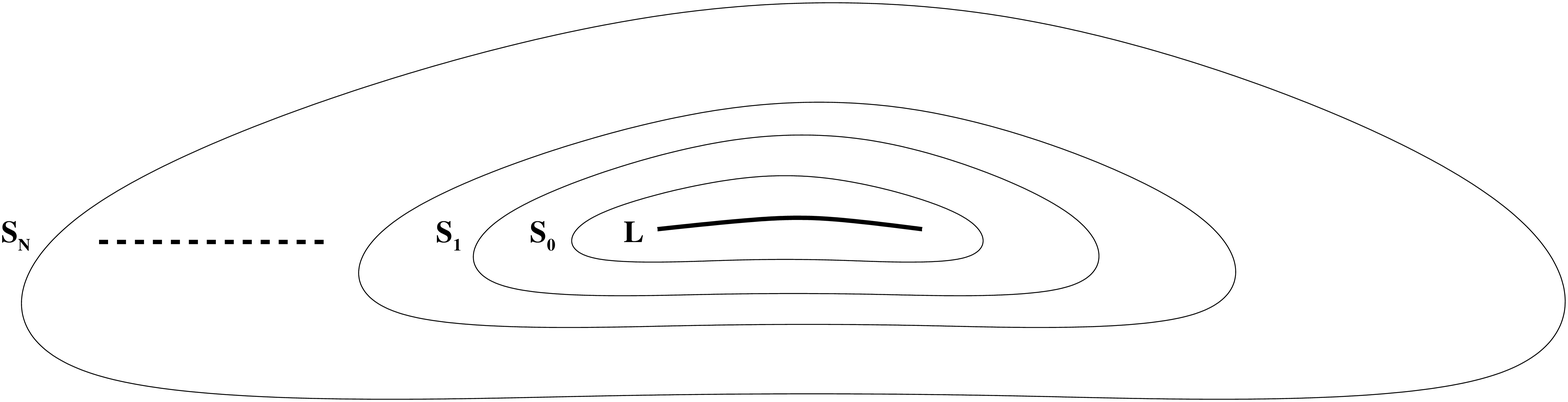}
\caption{Weinstein neighborhoods.}
	\label{fig.neighborhoods}
\end{figure}

We show furthermore, again assuming that $X$ does not possess any symplectic disc of 
vanishing Maslov index and boundary on $L$ nontrivial in $H_1(L,\q)$, that all the components of these rational curves which are in $U=U_0$ are $J$-holomorphic cylinders and that only one of them is asymptotic to a geodesic of type $A$. These rational curves are all broken into a finite union of cylinders closed by two planes as in Figure~\ref{fig.cylinders}.


\begin{figure}[ht]
\centering
\includegraphics[height =.9cm]{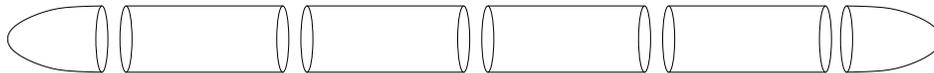}
\caption{Broken curve.}
        \label{fig.cylinders}
\end{figure}

The second step of the proof consists of showing that the degree of the evaluation map $eval_0 \colon\mathcal{M}_{0,k+1}^{A,p_k}(H,L;J)  \to L$ vanishes.
Indeed, each cylinder of $U$ asymptotic to a geodesic of type $A$ lifts canonically to the infinite cyclic covering $\widetilde U$ of $U$  once chosen a lift of this closed geodesic. 
Hence, the whole compact family $\mathcal{M}$ of cylinders of $U$ asymptotic to the type $A$ geodesics and touching $L$ lifts to a compact family of cylinders in this covering $\widetilde U$.
As a consequence, the evaluation map  $\mathcal{M} \to L$ decomposes through the infinite cyclic covering $\widetilde L$ of $L$ as $\mathcal{M} \to \widetilde L \to L$. This forces its degree to vanish. But the later equals the Gromov-Witten invariant $\langle [pt]_k;[pt],\omega^k \rangle^X_A$ which is nontrivial by assumption, hence the contradiction. 
 
\subsection{Singular almost complex structures and stable curves}\label{subsec.singular}

\subsubsection{Singular almost complex structures}

We recall the following definitions  coming from symplectic field theory \cite{EGH} 
(compare \cite[\S2.1]{welsch}). In this paragraph, $(X , \omega)$  may be any $2n$-dimensional symplectic manifold. 

\begin{dfn}
A $S$-neck of the manifold $(X , \omega)$  is an embedding $\phi : S \times [- \epsilon , \epsilon ] \to X$
which satisfies $\phi^* \omega = d(e^t \theta)$, where $(S , \theta)$ is a closed contact manifold of dimension $2n-1$, $\epsilon \in \R_+^*$ and $t \in [- \epsilon , \epsilon ]$.
\end{dfn}

\begin{dfn}
\label{defssing}
An almost-complex structure $J$ of $X$ is called $S$-singular if there exists a $S$-neck $\phi : S \times [- \epsilon , \epsilon ] \to X$
such that:

1) The domain of definition of $J$ is the complement $X \setminus \phi (S \times \{ 0 \})$.

2) The almost-complex structure $\phi^* J$ preserves the contact distribution $\ker (\theta)$
of $S \times \{ t \}$ for every $t \in  [- \epsilon , \epsilon ]   \setminus \{ 0 \}$ and its restriction to
$\ker (\theta)$ does not depend on $t \in  [- \epsilon , \epsilon ]   \setminus \{ 0 \}$.

3) $\forall (x,t) \in S \times ([- \epsilon , \epsilon ]   \setminus \{ 0 \})$, $\phi^* J (\frac{\partial}{\partial t}) \vert_{(x,t)} =
\alpha' (t) R_\theta \vert_{(x,t)}$, where $\alpha' : [- \epsilon , \epsilon ]   \setminus \{ 0 \} \to \R_+^*$ is even with infinite
integral and $R_\theta$ denotes the Reeb vector field of $(S , \theta)$.
\end{dfn}

\begin{dfn}
An almost-complex structure $J$ of $(X , \omega)$ is called singular if it is $S$-singular for some
$(2n-1)$-dimensional contact manifold $(S , \theta)$.
\end{dfn}

Denote by $\partial {\cal J}_\omega$ the space of singular almost-complex structures of $X$ compatible
with $\omega$. It is equipped with the following topology. A singular almost-complex structure $J$
is said to be in the $\eta$-neighborhood of $J_0 \in \partial {\cal J}_\omega$, $\eta > 0$, if these structures
are $S$-singular for the same contact manifold $(S , \theta)$ and if there exist pairs $(\phi_0 , \alpha'_0)$
and $(\phi , \alpha')$ given by Definition \ref{defssing} such that:

1) The distance between $\phi$ and $\phi_0$ is less than $\eta$. This distance in the space of embeddings
of finite regularity is induced by some fixed metric on $X$. The regularity of these embeddings is one more
than the regularity of the almost-complex structures which throughout the paper is supposed to be finite.

2) There exists $0 < \delta < \epsilon$ such that $2\eta \int_\delta^\epsilon \alpha'_0 (t) dt > 1$ and
the distance between the restrictions of $J$ and $J_0$ to the complement $X \setminus \phi_0 (S \times ]- \delta , \delta [)$
is less than $\eta$.

\begin{dfn}
\label{defjsneck}
An almost-complex structure $J \in {\cal J}_\omega$ is said to have an $S$-neck if $X$
has an $S$-neck $\phi : S \times [- \epsilon , \epsilon ] \to X$ such that

1) The almost-complex structure $\phi^* J$ preserves the contact distribution $\ker (\theta)$
of $S \times \{ t \}$ for every $t \in  [- \epsilon , \epsilon ]$ and its restriction to
$\ker (\theta)$ does not depend on $t \in  [- \epsilon , \epsilon ] $.

2) $\forall (x,t) \in S \times [- \epsilon , \epsilon ]$, $\phi^* J (\frac{\partial}{\partial t}) \vert_{(x,t)} =
\alpha' (t) R_\theta \vert_{(x,t)}$, where $\alpha' : [- \epsilon , \epsilon ]   \to \R_+^*$ is even.

The integral $\int_{- \epsilon}^\epsilon \alpha' (t) dt  $ is called the length of the neck.
\end{dfn}

Hence, an $S$-singular almost-complex structure is an almost-complex structure having an $S$-neck
of infinite length. This terminology comes from symplectic field theory \cite{EGH}. Indeed, if $J \in {\cal J}_\omega$
has an $S$-neck and $\alpha$ is the odd primitive of the function $\alpha'$ given by Definition \ref{defjsneck},
then, the diffeomorphism $(x,t) \in S \times [- \epsilon , \epsilon ] \mapsto (x, \alpha (t)) \in S \times [\alpha (-\epsilon) , \alpha (\epsilon) ]$
pushes forward $J$ to an almost-complex structure which preserves the contact distribution and sends the Liouville
vector field $\frac{\partial}{\partial t}$ onto the Reeb vector field $R_\theta$, compare \S $2.2$ of \cite{HWZfinite}.
In the language of symplectic field theory, a symplectic manifold $(X , \omega)$ equipped with a
$S$-singular almost-complex structure $J$ is an almost-complex manifold $(X \setminus \phi (S \times \{ 0 \}) , J)$
with cylindrical end.

Set $\overline{\cal J}_\omega = {\cal J}_\omega \sqcup \partial {\cal J}_\omega$ and equip this space with the following topology.
An almost-complex structure $J \in {\cal J}_\omega$
is said to be in the $\eta$-neighborhood of the $S$-singular almost-complex structure $J_0 \in \partial {\cal J}_\omega$, $\eta > 0$,
if it has an $S$-neck and there exists pairs $(\phi_0 , \alpha'_0)$,
$(\phi , \alpha')$ given by Definitions \ref{defssing} and \ref{defjsneck} such that:

1) The distance between $\phi$ and $\phi_0$ is less than $\eta$ in the space of embeddings
of our fixed finite regularity.

2) There exists $0 < \delta < \epsilon$ such that $2\eta \int_\delta^\epsilon \alpha'_0 (t) dt > 1$ and
the distance between the restrictions of $J$ and $J_0$ to the complement $X \setminus \phi_0 (S \times ]- \delta , \delta [)$
is less than $\eta$.

In particular, when $\eta$ is closed to zero, the length of the $S$-neck of $J$ is closed to infinity.

\subsubsection{Stable curves}

We recall that the combinatorial type of a punctured nodal curve of arithmetical genus $0$ is encoded by a tree, see \cite[Definition~6.6.1]{KontManin}. The vertices of this tree correspond to the irreducible components of the curve, the edges of valence $2$ to the nodes of the curve, and the edges of valence $1$ to the punctures. 

Such a curve is called stable whenever each vertex bounds at least three edges of the tree. A special point of the curve is a puncture or a node of the curve, these points are in one-to-one correspondence with the edges of the associated tree. 

\begin{dfn}
A special point of an irreducible component $D$ of a punctured nodal curve of arithmetical genus $0$ is called essential if and only if it is either a puncture or a node such that the attached curve has at least one puncture. 

Such a component is called essential iff it contains at least three essential points.
\end{dfn}

If $C$ is a nodal curve  of arithmetical genus zero with at least three punctures, the associated stable curve is obtained by contracting all the non-essential irreducible components of $C$.
We denote by $\overline{\mathcal{M}}_{0,k}$ the moduli space of genus zero stable curves with $k$ punctures, see~\cite{Keel}. 

\begin{dfn}
A punctured nodal curve of genus $0$ is called string-like when all its irreducible components have at most two nodes.
\end{dfn}

Hence, the associated tree of a string-like nodal curve is of type $A_n$ after removing the edges of valence $1$ corresponding to the punctures of the curve.

\subsection{Singular structures adapted to $\sol$ Lagrangian submanifolds}

\begin{nota}\label{nota.gene}
Let us fix some notations which we will use in the sequel.

\begin{description}
\item[ ] $(X,\omega)$: closed uniruled symplectic manifold of dimension $6$.
\item[ ] $L$: Lagrangian submanifold of $(X,\omega)$ homeomorphic to the suspension of a hyperbolic diffeomorphism of the two-torus.
\item[ ] $A$: Element of $H_2(X;\z)$ given by Definition~\ref{dfn.uniruled}.
\item[ ] $k$: Integer $\geq 2$, given by Definition~\ref{dfn.uniruled}.
\item[ ] $H_1,\dots,H_k$: submanifolds of $(X,\omega)$, disjoint from $L$, transversal one to each other and Poincar\'e dual to $\omega$.
\item[ ] $x$: Point of $L$.
\item[ ] $p_k$: Point of $\mathcal{M}_{0,k+1} \subset \overline{\mathcal{M}}_{0,k+1}$.
\item[ ] $g_0$: $\sol$ metric on $L$ given by Lemma~\ref{lem.quotient}.
\item[ ] $U_0$: Weinstein neighborhood of $L$, disjoint from $H_1,\dots,H_k$, whose boundary is isomorphic to the unitary cotangent bundle of $(L,g_0)$. 
\item[ ] $S_0$: Boundary of $U_0$.
\item[ ] $J_0$: $\omega$-positive generic $S_0$-singular almost-complex structure on $X$.
\end{description}
\end{nota}

\begin{dfn}
We say that a nodal $J_0$-holomorphic curve $C \subset X$ of arithmetical genus $0$ passing through $x$ represents the class $p_k$ iff there exists $x_i \in C \cap H_i$, $1 \leq i \leq k$, such that the  stable curve associated to $C\setminus \{x,x_1,\dots,x_k\}$ represents $p_k \in \mathcal{M}_{0,k+1}$. We say that it represents $(A,p_k)$ if furthermore, it is homologous to $A$.
\end{dfn}

\begin{lem}\label{lem.nodal.curves}
Let $C$ be a rational $J_0$-holomorphic curve of $(X,\omega)$ representing $(A,p_k)$. Assume:
\begin{enumerate}
\item That $(X,\omega)$ contains no symplectic disc of Maslov index zero whose boundary on~$L$ 
does not vanish in $H_1(L;\q)$.
\item That all closed geodesics of $(L,g_0)$ associated to the nodes of $C$ are either of type $B$ or of type $A$ with Morse-Bott index one.
\end{enumerate}

Then, $C$ is string-like. Furthermore, the cylinder in $C\cap U_0$ containing $x$ is the only one which converges to geodesics of type $A$.
\end{lem}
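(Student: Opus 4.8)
The plan is to apply the compactness theorem of symplectic field theory and then run an index count which is drastically simplified by the fact that $X$ has dimension six. Since $J_0$ is $S_0$-singular, the curve $C$ is already a punctured nodal curve all of whose irreducible components are properly embedded either in $U_0 = T^*L$ or in $X \setminus U_0$; the nodes, at which matching components are asymptotic to common Reeb orbits of $S_0$, correspond to the closed geodesics of $(L,g_0)$. By hypothesis (2) these geodesics are of type $A$ or of type $B$, so by Proposition~\ref{prop.index.A} together with Proposition~\ref{prop.metric.choice} their Conley--Zehnder index equals $1$, while by Proposition~\ref{prop.index.B} they are hyperbolic with vanishing Conley--Zehnder index. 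Because $n=3$, the term $(n-3)\chi$ disappears from the Fredholm index formula of \cite{EGH}, so that for each genus-zero component $D$ one simply has
$$
\mathrm{ind}(D) = 2 c_1^\tau(D) + \sum_{\mathrm{positive}} \mu_{\mathrm{CZ}} - \sum_{\mathrm{negative}} \mu_{\mathrm{CZ}}\,,
$$
the relative first Chern number being computed in the trivialisations $(h_1,\dots,h_4)$ and $(g_1,\dots,g_4)$ of $\xi$ furnished by Lemma~\ref{lem.sp} and Proposition~\ref{prop.index.B}.

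First I would analyse the components lying in $U_0$. Since $\omega$ is Poincar\'e dual to the $H_i$, which are disjoint from $L$, these may be taken disjoint from $U_0$; hence a component $D\subset U_0$ meets none of the $H_i$, and the point $x$ is the only incidence constraint it can carry. Each end of $D$ over a closed geodesic $\gamma$ may be capped inside $T^*L$ by the cylinder swept out by scaling the covectors of $\gamma$ down to the zero section; this turns $D$ into a genuine symplectic surface with boundary on $L$, whose connected pieces are discs exactly when $D$ is a plane. For such a capped disc one computes, using that loops on the zero section carry vanishing Maslov index, that its Maslov index equals $\mu_{\mathrm{CZ}}(\gamma)$, namely $1$ for type $A$ and $0$ for type $B$. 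By Lemma~\ref{lem.homol} the fibre (type $A$) directions are torsion in $H_1(L;\z)$, hence trivial in $H_1(L;\q)$, whereas a type $B$ geodesic projects onto a generator of $H_1(B;\q)$ and is therefore nontrivial in $H_1(L;\q)$. Consequently a plane over a type $B$ geodesic would produce a symplectic disc of Maslov index zero with boundary nontrivial in $H_1(L;\q)$, which hypothesis (1) forbids: there are no type $B$ planes.

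Next I would run the global index budget. By the dimension count underlying Definition~\ref{dfn.uniruled} the curve $C$ is rigid, so the whole building carries total index zero, and genericity of $J_0$ keeps every unconstrained component at non-negative index. The incidence with $x$ is a codimension-six condition, which can only be absorbed by a single $U_0$-component, while the remaining $U_0$-components are unconstrained and free of any $H_i$. Combined with the vanishing Conley--Zehnder index of type $B$ orbits, the absence of type $B$ planes, and the fact that no sub-configuration may close up, via the capping construction, into a Maslov-zero disc with boundary nontrivial in $H_1(L;\q)$, this rigidity leaves no room for a $U_0$-component with three or more ends, nor for a second component carrying a type $A$ end. Hence every component of $C$ in $U_0$ is a cylinder, exactly one of them passes through $x$, and that one is the unique cylinder asymptotic to type $A$ geodesics, the others being asymptotic to type $B$ geodesics only.

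Finally, feeding the same accounting into the components lying in $X\setminus U_0$ and using that the arithmetic genus vanishes, every vertex of the associated tree bounds at most two nodes: the tree is of type $A_n$, i.e. $C$ is string-like, and both assertions of the Lemma follow. The hard part will be precisely this bookkeeping of relative Chern numbers and Conley--Zehnder indices across the broken building, in particular handling correctly the Morse-Bott corrections coming from the one-parameter families of type $A$ geodesics and guaranteeing, through genericity of $J_0$, that no component of negative index occurs. It is the interplay of these indices with hypothesis (1) that must simultaneously rule out branchings of the tree and superfluous type $A$ ends, and making the capping construction and its Maslov computation rigorous is where most of the work lies.
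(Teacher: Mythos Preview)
Your overall strategy matches the paper's: compute Fredholm indices of the pieces using the $n=3$ simplification and the Conley--Zehnder data from Propositions~\ref{prop.index.A} and~\ref{prop.index.B}, balance the global index against the constraints to force rigidity of every component, and then invoke hypothesis~(1) to exclude the remaining bad configurations. There are, however, two substantive gaps.

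First, your claim that type~$B$ orbits have $\mu_{\mathrm{CZ}}=0$ is only correct in the frame $(g_1,\dots,g_4)$ of Proposition~\ref{prop.index.B}, and that frame does not descend to $L$ when the hyperbolic monodromy has a negative eigenvalue. The paper has to twist the trivialisation to make it extend over $U_0$, and in the resulting frame $\mu_{\mathrm{CZ}}$ of a type~$B$ geodesic depends on its class in $H_1(L;\z)/\tors$. What is actually used is only that over each $U_0$-component these indices \emph{sum} to zero, because the asymptotic classes of a punctured sphere in $T^*L$ add up to zero and the type~$A$ classes are torsion. This distinction matters for the Maslov computation you rely on.

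Second, and more seriously, you apply hypothesis~(1) to rule out type~$B$ planes in $U_0$, but those are already excluded for free: a holomorphic plane in $T^*L$ exhibits its asymptotic geodesic as null-homotopic in $L$, and type~$B$ geodesics are nontrivial there. The place where hypothesis~(1) is genuinely needed is on the \emph{other} side of $S_0$. Once rigidity pins down the type~$A$ punctures to the single component through $x$, any failure of string-likeness forces an extra leaf of the tree which is a plane in $X\setminus U_0$ asymptotic to a type~$B$ geodesic. Your capping-by-scaling construction lives entirely inside $T^*L$ and says nothing about such a plane. The paper's device is different: take the double cover of that plane branched at one interior point --- still a plane, now asymptotic to the square of the geodesic --- and glue it to the trivial cylinder of $U_0$ over that geodesic; the result is a symplectic disc with boundary on $L$, nontrivial in $H_1(L;\q)$, whose Maslov index one checks to be zero, contradicting~(1). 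This is the step your sketch does not supply, and ``feeding the same accounting into $X\setminus U_0$'' does not produce it.

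A smaller omission: to make the genericity argument bite you must know that multiply covered components do not have smaller index than their underlying simple curves. The paper checks this explicitly, on both sides of $S_0$, by showing the index formula is non-decreasing under branched covers; you should do the same.
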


\begin{proof}
We begin with the computation of the index of each irreducible component of $C$, that is the expected dimension of the moduli space containing this component. 
Let $D$ be such a component, it is isomorphic to a punctured Riemann sphere. By assumption and from \cite{HWZ1}, $D$ is asymptotic at each of its punctures to a cylinder on a geodesic of type $A$ or $B$. 
The normal bundle of $D$ at its punctures of type $A$ is trivialized by Proposition~\ref{prop.index.A}. If $L$ is the suspension of a diffeomorphism of the torus with positive eigenvalues, then the normal bundle of $D$ at its punctures of type $B$ is trivialized by Proposition~\ref{prop.index.B}. Otherwise, for instance if the eigenvalue associated to $X$ is negative, we perturb the trivialization in the $\langle g_1,g_2 \rangle$-plane by a rotation whose angle only depends of the coordinate along $e_3$, in such a way that this angle is an odd multiple of $\pi$ at the altitude $\lambda$ associated to the eigenvalues of our diffeomorphism. This trivialization in $T^*\sol$ induces on the quotient a trivialization of the normal bundle of $D$ at its punctures of type $B$. Moreover, the Conley-Zehnder index of these geodesics, calculated in this trivialization only depends on the homology class of the geodesic in $H_1(L;\z)/\tors$. Finally, our trivializations can be extended to trivializations of $TX$ along the geodesics, by adding the Liouville vector field and the  vector field tangent to the geodesics. Moreover these chosen trivializations extend to trivializations of $TU_0$. 

We denote by $\mu_{CZ}(p)$ the Conley-Zehnder index of a puncture $p$ of $D$ computed in our chosen trivialization, and by $\mu(D)$ twice the obstruction to extend this trivialization of $TX$ at punctures to the whole $D$. We just saw that if $D \subset U_0$, then $\mu(D) = 0$. From \cite{Bour} (see also \cite{HWZIII} and \cite{WelsOpt}), the index of $D$ is given by the following Riemann-Roch formula.
$$
\ind_\r(D) = 
\#\{\textrm{punctures of type $A$}\} \pm \sum_{p \in\{\textrm{punct. of $D$}\} } \mu_{CZ}(p)
+ \mu(D)\;,
$$
since the dimension of $X$ is $6$, where the $\pm$ sign depends on whether $D\subset U_0$ or $D \subset X\setminus U_0$.

When $D\subset U_0$, we deduce that $\ind_\r(D) = 
2 \#\{\textrm{punctures of type $A$}\}$, since $\mu(D) = 0$, the total homology class of geodesics of type $B$ vanishes and $\mu_{CZ}(p) = 1$ if $p$ is of type $A$ and $D \subset U_0$, see Proposition~\ref{prop.metric.choice}. In particular, this index can only increase under branched coverings, so that the moduli space containing $D$ is of the expected dimension $2 \#\{\textrm{punctures of type $A$}\}$. 

Likewise, if $D \subset X\setminus U_0$ is a branched covering of $D'$ of degree $d$, then 
\begin{eqnarray*}
\ind_\r(D) &= &
- \#\{\textrm{punctures of type $A$ of $D$}\}   \\
&& + \mu(D) -\sum_{\substack{p \in\{ \textrm{punctures}\\ \textrm{of type $B$ of $D$}\} }} \mu_{CZ}(p)  \\
&=& - \#\{\textrm{punctures of type $A$ of $D$}\}   \\
&& + d\left(\mu(D')-\sum_{\substack{p \in\{   \textrm{punctures}\\ \textrm{of type $B$ of $D'$}\} }} \mu_{CZ}(p)
\right) \\
&\geq& d\,\ind_\r(D')\;,
\end{eqnarray*}
since $\mu_{CZ}(p) = 2$ if $p$ is a puncture of type $A$ and $D \subset X\setminus U_0$, see Proposition~\ref{prop.metric.choice} and \cite[Proposition~5.2]{Bour}.

Again, the moduli space containing $D$ is of the expected dimension $\ind_\r(D)$. From this follows that the curve $C$ depends on

$$
\sum_{\substack{D \in\{   \textrm{components}\\ \textrm{of $C$}\} }} \ind_\r(D) = \mu(C) + \#\{\textrm{nodes of type $A$ of $C$}\}
$$
degrees of freedom.

Now, at each node of type $A$ of $C$, the two adjacent components of $C$ have to converge to the same geodesic of type $A$, which belongs to a one-dimensional space. Likewise, $C$ must contain the point $x$ and represent $p_k$. These constraints require
$$
 \#\{\textrm{nodes of type $A$ of $C$}\} + 4 + 2(k-2) = \mu(C) + \#\{\textrm{nodes of type $A$ of $C$}\}
 $$
degrees of freedom since by Definition~\ref{dfn.uniruled}, $\mu(C) = 4 + 2(k-2)$. As a consequence, all the components of $C$ are rigid. We deduce in particular that only one of the components of $C\cap U_0$ contains punctures of type $A$, the one containing $x$, and that this component has exactly two such punctures. Furthermore, if $C$ were not string-like, it would have a component isomorphic to $\c$ in $X\setminus U_0$, rigid, and converging to a geodesic of type $B$. The double covering of this component branched at one point to which we add the trivial cylinder of $U_0$ over the type $B$ geodesic  would provide a symplectic disc of   Maslov index zero with boundary on $L$, which is impossible. Hence the result.
\end{proof}

\begin{prop}\label{prop.nodal.curves}
Keeping Notation~\ref{nota.gene}, we assume that $(X,\omega)$ does not contain any symplectic disc 
of Maslov index zero whose boundary on $L$ does not vanish in $H_1 (L ; \q)$. Changing $S_0$ if necessary, the generic $S_0$-singular almost-complex structure $J_0$ can be chosen such that all rational $J_0$-holomorphic curves of $(X,\omega)$ representing $(A,p_k)$ satisfy the conditions of Lemma~\ref{lem.nodal.curves}.
\end{prop}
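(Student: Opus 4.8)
The plan is to choose the singular structure $J_0$ so that Lemma~\ref{lem.nodal.curves}'s hypotheses are met generically, the key issue being condition~(2): all closed geodesics associated to nodes of $J_0$-holomorphic curves representing $(A,p_k)$ must be of type $B$ or type $A$ of Morse-Bott index $1$. The first step is to control which Reeb orbits can appear. By the compactness theorem of \cite{BEHWZ}, the $J_0$-holomorphic curves in question have finite total $\omega$-energy, bounded a priori by the symplectic area of the class $A$. Since the Reeb orbits on $S_0$ correspond to closed geodesics of $(L,g_0)$ and the length of such a geodesic controls the energy a curve asymptotic to it must carry, only finitely many homotopy classes of closed geodesics can occur as asymptotic limits. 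I would collect these into a finite set $\Pi$ of homotopy classes of $L$, depending only on $A$, $\omega$, and the bound on energy.

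With $\Pi$ in hand, the second step is to invoke Proposition~\ref{prop.metric.choice}: there exists a $\sol$-metric $g_0$ on $L$ for which no element of $\Pi$ is realized by a closed geodesic of type $C$, and for which every closed geodesic of type $A$ homotopic to an element of $\Pi$ has Morse-Bott index $1$. Concretely this is the metric in which the fibers of $p\colon L\to B$ have small volume relative to the length of the base, as discussed after Proposition~\ref{prop.metric.choice}. Replacing the original metric $g_0$ of Notation~\ref{nota.gene} by this one, and correspondingly changing the Weinstein neighborhood $U_0$ and its boundary $S_0$ (this is the ``changing $S_0$ if necessary'' clause of the statement), guarantees that any node of a relevant curve sits over a geodesic of type $A$ with Morse-Bott index $1$ or a geodesic of type $B$, exactly condition~(2). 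The subtlety here is that $\Pi$ must be determined \emph{before} fixing the metric, yet the energy bound that defines $\Pi$ depends only on the homology class $A$ and the cohomology class of $\omega$, which are unchanged by rescaling the fibers; so the argument is not circular.

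The third step is genericity. Having fixed $S_0$, I would choose $J_0$ to be a generic $\omega$-positive $S_0$-singular almost-complex structure in $\partial\mathcal{J}_\omega$. Genericity ensures that all the moduli spaces of $J_0$-holomorphic curves asymptotic to the admissible Reeb orbits are cut out transversally and have their expected dimension, which is what the index computations in the proof of Lemma~\ref{lem.nodal.curves} presuppose; it also rules out, for dimension reasons, curves converging at their punctures to geodesics of type $A$ with index other than $1$ or to type $C$ geodesics, since such orbits simply do not exist for this metric. Condition~(1) is a standing hypothesis of the proposition, so nothing further is needed there. Together these give a curve $C$ satisfying both hypotheses of Lemma~\ref{lem.nodal.curves}.

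The main obstacle is the interplay between the a priori energy bound and the choice of metric: one must verify that rescaling the fibers to achieve the index-$1$ and no-type-$C$ conditions does not enlarge the finite set $\Pi$ of asymptotic classes in a way that breaks Proposition~\ref{prop.metric.choice}. The resolution is that $\Pi$ depends only on invariants insensitive to this rescaling, and Proposition~\ref{prop.metric.choice} applies to \emph{any} prescribed finite $\Pi$; a secondary technical point is that the asymptotic behavior at punctures is governed by \cite{HWZ1}, so that each puncture genuinely converges to a single nondegenerate-up-to-Morse-Bott Reeb orbit, legitimizing the identification of nodes with closed geodesics of definite type. Once these are in place, transversality for generic $J_0$ completes the argument.
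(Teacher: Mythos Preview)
Your identification of the obstacle is correct, but your proposed resolution is where the argument breaks. The finite set $\Pi$ of homotopy classes that can occur as asymptotic limits is \emph{not} metric-independent: it is the set of classes realized by closed geodesics of length at most $\int_A\omega$, and that length is measured in the metric defining $S_0$. When you shrink the fibers to force the elements of $\Pi$ to have length $<4-\pi$ (as in the proof of Proposition~\ref{prop.metric.choice}), you simultaneously create \emph{new} type~$A$ classes of length $\le\int_A\omega$ for the new metric, and those were not in your original $\Pi$. These new classes may well have length exceeding $2\pi/\sqrt 2$ (hence Morse--Bott index $>1$) or be homotopic to type~$C$ geodesics; Proposition~\ref{prop.metric.choice} gives you no control over them. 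Concretely, a single shrinking works only if $\int_A\omega<4-\pi$, which there is no reason to expect.

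The paper resolves this by iterating: from $(g_0,\Pi_0)$ it produces $g_1$ via Proposition~\ref{prop.metric.choice}, then recomputes $\Pi_1$ for $g_1$, produces $g_2$, and so on, obtaining nested Weinstein neighborhoods $U_{N_0}\subset\cdots\subset U_0$ with $N_0=\int_A\omega/l_0$ chosen to exceed the number of punctures any relevant curve can have. A combinatorial (Euler-characteristic) count on the forest encoding $C\cap U_0$ then shows that for some level $i$ the slab $C\cap(U_i\setminus U_{i+1})$ contains only cylinders; hence the asymptotic classes on $S_{i+1}$ lie in $\Pi_i$, and now Proposition~\ref{prop.metric.choice} applied at step $i{+}1$ genuinely controls them. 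Lemma~\ref{lem.nodal.curves} at level $i{+}1$ then propagates the cylinder structure inward to $S_{N_0}$. This pigeonhole through a telescoping stack of necks is the missing idea; a single change of metric cannot close the feedback loop you correctly flagged.
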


Hence, for all curves given by Proposition~\ref{prop.nodal.curves}, closed geodesics associated to their nodes are either of type $B$, or of type $A$ with Morse-Bott index one.

\begin{proof}
Let $C$ be a rational $J_0$-holomorphic curve of $(X,\omega)$ representing $(A,p_k)$. From Stokes' formula, the total length of the closed geodesics associated to the nodes of $C$ equals the energy of $C\cap U_0$, and thus gets bounded from above by the total energy $\int_A\omega$. Let $l_0$ be the length of the shortest closed geodesic of $(L,g_0)$, the number of punctures of $C$ is bounded from above by the quotient $\int_A\omega/l_0 =: N_0$. 

Let $\Pi_0$ be the finite set of homotopy classes of $L$ realized by closed geodesics of type $A$ and length $\leq \int_A\omega$. From Proposition~\ref{prop.metric.choice}, there exists a metric $g_1$ on $L$ such that no element of $\Pi_0$ gets realized by a closed geodesic of type $C$ for $g_1$ and
such that any closed geodesic of type $A$ which realizes an element of $\Pi_0$ is of Morse-Bott index~$1$.
 Without loss of generality, we can assume that the  Weinstein neighborhood $U_1$ of $L$ isometric to the unitary cotangent ball bundle of $(L,g_1)$ is strictly included in the interior of $U_0$. Let $S_1$
 be the boundary of $U_1$ and let us assume that $J_0$ is $(S_0\cup S_1)$-singular. Again, the total length of closed geodesics of $(L,g_1)$ associated to the nodes of $C\cap S_1$ is  bounded from above by $\int_A\omega$, and the set $\Pi_1$ of homotopy classes of $L$ realized by closed geodesics of type $A$ of length $\leq \int_A\omega$ for $g_1$ is finite. Proposition~\ref{prop.metric.choice} gives a metric $g_2$ on $L$ having, with respect to $\Pi_1$, the same properties as $g_1$ with respect to $\Pi_0$. We construct in this way a finite number of $\sol$-metrics $g_{N_0},\dots,g_0$ on $L$ which induce a finite number of  Weinstein neighborhoods $U_{N_0}\subset \dots \subset U_0$, of respective boundaries $S_{N_0},\dots,S_0$. Denote by $S$ the union $S_{N_0}\cup \dots \cup S_0$ and assume that $J_0$ is generic $S$-singular. Then, the pair $(S_{N_0},J_0)$ fits. That is, replacing $S_0$ by $S_{N_0}$ and $J_0$ by a $S_{N_0}$-singular structure close to a generic $S$-singular almost-complex structure, all rational $J_0$-holomorphic curves which represent $(A,p_k)$ satisfy the conditions of Lemma~\ref{lem.nodal.curves}.

Indeed, let $C$ be such a curve. The combinatorial type of $C\cap U_0$ is encoded by a forest whose leaves correspond to nodes of $C\cap S_0$, there are at most $N_0$ such leaves. Let $t$ be the number of trees of that forest, $\#\mathcal{A}$ the number of edges, $\#\mathcal{S}$ the number of vertices and for every vertex $s$, $v(s)$ be the valence of that vertex. The Euler formula gives the relation $\#\mathcal{S} - \#\mathcal{A} = t$, while $\#\mathcal{A} = \frac 12 \sum_{s\in \mathcal{S}}v(s)$, where $\mathcal{S}$ is the set of vertices. Hence, $t = \sum_{s\in \mathcal{S}}\left(1- \frac 12 v(s)\right)$ and we deduce the relation
$$
\frac 12 \#\bigl\{s\in \mathcal{S}\st v(s) \geq 3\bigr\} < t + \frac 12 \sum_{v(s)\geq 3}\left(v(s)-2\right) \leq \frac 12 N_0\;.
$$

There exists therefore $0\leq i < N_0$ such that $C \cap (U_i\setminus U_{i+1})$ contains only cylinders, encoded by bivalent vertices. All nodes of $C\cap S_{i+1}$ of type $A$ or $C$ thus correspond to closed geodesics homotopic to the ones associated to nodes of type $A$ or $C$ of $C\cap S_i$;  they are homotopic to $\Pi_i$. By construction of $g_{i+1}$, this implies that these nodes are of type $A$ and that the Morse-Bott indices of those geodesics all equal $1$. Lemma~\ref{lem.nodal.curves} applies to $(S_{i+1},J_0)$ and implies with the compactness Theorem \cite{BEHWZ} that all components of $C\cap U_{i+1}$ are cylinders. Again, by construction of the metrics $g_j$, $j\geq i+1$, we deduce that nodes of $C\cap S_{N_0}$ of type $A$ or $C$ are of type $A$ and Morse-Bott index one. Hence the result.
\end{proof}
\subsection{Proof of Theorem~\ref{thm.main}}

Let us assume that the manifold $(X,\omega)$ does not contain any symplectic disc of Maslov index zero whose boundary on $L$ does not vanish in $H_1(L,\q)$ and let us adopt Notation~\ref{nota.gene}. For every generic almost-complex structure $J$ of  $(X,\omega)$, we denote by $\mathcal{M}_{0,k+1}^{A,p_k}(H;J)$ the moduli space of rational $J$-holomorphic curves of $X$, homologous to $A$, conformal to $p_k$, which have $k+1$ marked points $x_0,\dots,x_k$ such that $x_i \in H_i$ for $1 \leq i \leq k$. We denote by $eval_0 \colon \mathcal{M}_{0,k+1}^{A,p_k}(H;J) \to X$ the evaluation map at $x_0$, its degree $\langle [pt]_k;[pt],\omega^k \rangle^X_A$ is  nontrivial by assumption.

Denote by $\mathcal{M}_{0,k+1}^{A,p_k}(H,L;J) = eval_0^{-1}(L)$, and by abuse 
$$
eval_0 \colon \mathcal{M}_{0,k+1}^{A,p_k}(H,L;J)  \to L
$$
the induced evaluation map. Its degree remains $\langle [pt]_k;[pt],\omega^k \rangle^X_A$ and thus nonzero.

From the compactness Theorem \cite{BEHWZ} in symplectic field theory, when $J$ converges to $J_0$, the space $\mathcal{M}_{0,k+1}^{A,p_k}(H,L;J)$ degenerates to a moduli space of string-like curves given by Proposition~\ref{prop.nodal.curves}. The unique non rigid component of any of these curves being a cylinder of $U_0$ which converges to a geodesic of type $A$. Every geodesic of type $A$ belongs to a $1$-parameter compact family. Let $A$ be such a family. We denote by $\widetilde L$ the infinite cyclic covering of $L$ associated to the projection $L \to B$, and by $\widetilde U_0$ the associated infinite cyclic covering of $U_0$. Let $\widetilde A$ be a lift of $A$ in $\widetilde U_0$. Then, every cylinder of $U_0$ asymptotic to an element of $A$ uniquely lifts in $\widetilde U_0$  to a cylinder asymptotic to an element of $\widetilde A$. Hence, if $\mathcal{M}$ is a compact family of cylinders of $U_0$ asymptotic to an element of $A$ and with a marked point in $L$, and if $ev \colon \mathcal{M} \to L$ is the associated evaluation map, this map lifts as $\widetilde{ev} \colon \mathcal{M} \to \widetilde L$ such that  the following diagram commutes. 
$$
\xymatrix{
   \mathcal{M}
   \ar[rrd]_{ev}\ar[rr]^{\widetilde{ev}}&&\widetilde L\ar[d]\\
   && L 
   }
 $$
We deduce that when $J$ is sufficiently close to $J_0$, the map $eval_0$ has a lift $\widetilde{eval_0} \colon \mathcal{M}_{0,k+1}^{A,p_k}(H,L;J) \to \widetilde L$ such that the diagram
$$
\xymatrix{
   \mathcal{M}_{0,k+1}^{A,p_k}(H,L;J)
   \ar[rrd]_-{eval_0}\ar[rr]^-{\widetilde{eval_0}}&&\widetilde L\ar[d]\\
   && L 
   }
 $$
 commutes.
This forces the degree of $eval_0$ to vanish and thus contradicts the hypothesis. \hfill{\qed}

\subsection{Final remarks}\label{rems}

\begin{enumerate}

\item \label{rem.difficulty}
If we do not assume that $(X,\omega)$ contains no symplectic disc of Maslov index zero whose boundary on $L$  does not vanish in $H_1(L,\q)$, then the irreducible component of a $J_0$-holomorphic curve homologous to $A$ containing $x \in U_0$ can be isomorphic to a sphere with more than two punctures, the additional punctures corresponding to geodesics of type $B$. Such components do not lift to $\widetilde U_0$, so that the argument used in the proof of Theorem~\ref{thm.main} to prove the vanishing of the degree of the evaluation map does not hold anymore. Furthermore, the counting of the number of such curves in $U_0 \cong T^*L$ passing through $x$ depends on the almost-complex structure $J_0$, or rather on the $CR$-structure on $\partial U_0$. We could not work out this case.

\item 
The minimal model program  applied to a uniruled three-dimensional projective manifold $X$ defined over $\r$ provides either a Fano variety, a Del Pezzo fibration over a curve or a conic bundle over a surface, all defined over $\r$. At least when the real locus $X(\r)$ is orientable, Koll\'ar proved \cite[Theorem~1.2]{koII} that if it contains a $\sol$ connected component, then so does the topological normalization of the real locus of its minimal model. Now, Koll\'ar proved \cite[Theorem~1.1]{koIII} that no conic bundle contains such a $\sol$ component while Corollary~\ref{cor.dp} together with \cite[\S 6.3]{koIV} proves that the same holds for Del Pezzo fibrations. Likewise Corollary~\ref{cor.fano} proves that there is no $\sol$ torus bundle in smooth Fano manifolds. It is possible to extend this result to the singular Fano varieties with only  real terminal singularities which might appear in this process. Indeed, Koll\'ar proved \cite[Theorem~1.10]{koII} that such singularities should be hypersuface singularities and these singularities can be symplectically smoothed. This way we get a symplectic deformation of the singular Fano variety together with a $\sol$ torus bundle Lagrangian submanifold. There cannot be a symplectic disc of vanishing Maslov index and boundary on this Lagrangian submanifold, since such a disc could be pushed away from the vanishing cycle of the singularity and thus would already exists in the singular Fano variety away from the singularity. This is impossible. In order to prove the non-existence of $\sol$ torus bundle component in a real uniruled projective three-fold with orientable real locus, it only remains to treat singular Fano varieties with complex conjugated singularities. The latter may be quotient singularities and we do not see right now  simple arguments to treat and include this case in the present paper.
 
\item
Koll\'ar points out that his results on the real MMP remain valid when the real locus of the manifold contains no two-sided $\r\p^2$, one-sided two-torus or one-sided Klein bottle with nonorientable neighborhood, see \cite[Condition~1.7] {koII}

\begin{lem}
Let $L$ be a $\sol$ closed three-dimensional manifold. Then $L$ contains no embedded $\r\p^2$, no embedded one-sided two-torus, and no embedded one-sided Klein bottle with nonorientable neighborhood.
\end{lem}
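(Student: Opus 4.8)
The plan is to exploit the fact that a closed $\sol$-manifold $L$ is, by Theorem~\ref{thm.classif}, either a torus bundle over the circle with hyperbolic monodromy or a sapphire, and in both cases it is covered by a torus bundle, i.e.\ it is aspherical with $\pi_1(L)$ a torsion-free polycyclic (solvable) group containing $\z^2$ as the normal subgroup $\Lambda_0$. First I would dispose of the $\r\p^2$: since $\r\p^2$ is not aspherical (its universal cover is $S^2$), while any embedded $\r\p^2$ in an aspherical $3$-manifold would be $\pi_1$-injective or compress; a compressing disc would turn it into a $2$-sphere, and an embedded $2$-sphere in the aspherical $L$ bounds a ball, which is impossible for $\r\p^2$. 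More directly, an embedded $\r\p^2$ forces $\z/2\z \hookrightarrow \pi_1(L) = \Lambda$, contradicting the torsion-freeness of $\Lambda$. So the key reduction is that $\pi_1(L)$ is torsion-free.

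Next I would handle the one-sided incompressible surfaces. For a one-sided embedded surface $\Sigma \subset L$, the boundary of a tubular neighborhood is a double cover $\hat\Sigma \to \Sigma$, so $\Sigma$ is either the two-torus (with $\hat\Sigma$ again a torus) or the Klein bottle (nonorientable neighborhood corresponding to the orientation double cover $\hat\Sigma$ being a torus or Klein bottle according to the twisting). The strategy is to examine $\pi_1(\Sigma) \to \pi_1(L) = \Lambda$. If the surface is incompressible, this map is injective, so I would look for a $\z^2$ (for the torus) or a Klein-bottle group (for the one-sided Klein bottle) as a subgroup of $\Lambda$, and then show the one-sidedness is incompatible with the structure of $\Lambda$. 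The main leverage is the projection $P_L \colon \Lambda \to \isom(B)$ with kernel $\Lambda_0 \cong \z^2$: any $\z^2$ subgroup of $\Lambda$ is commensurable with $\Lambda_0$ (because $\Lambda/\Lambda_0$ is virtually $\z$ acting hyperbolically, so it contains no further commuting directions), hence the torus $\Sigma$ must be homotopic to a fiber of $p \colon L \to B$. But the fiber is two-sided, and one can compute the algebraic self-intersection or track orientations along the monodromy to see that a one-sided embedding is obstructed.

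The cleanest bookkeeping device here is the orientation/two-sidedness dictionary: a surface $\Sigma$ is one-sided iff its fundamental class maps to the nonzero element of $H_2(L;\z/2\z)$ detecting $w_1$ of the normal bundle, equivalently iff the $\z/2\z$-coefficient self-intersection is nontrivial. I would therefore compute $H_*(L;\z/2\z)$ from Lemma~\ref{lem.homol} (via reduction mod $2$) and the intersection form, and observe that the fiber torus represents a class whose $\z/2\z$ self-intersection vanishes, ruling out a one-sided torus homotopic to the fiber; since every incompressible torus is so homotopic, no one-sided torus exists. For the Klein bottle with nonorientable neighborhood, I would use that its fundamental group's abelianization contains $2$-torsion which, pushed into $\Lambda$, must land in $\tors H_1(L;\z) \cong \Lambda_0/(A-I)\Lambda_0$, and the nonorientability of the neighborhood forces an incompatibility with the way $(A-I)$ acts.

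The hard part will be the one-sided Klein bottle case, where both the surface group and its embedding twist in ways that interact with the hyperbolic monodromy $A$; the sapphire case, which is already a quotient by an orientation-subtle involution, will require separating the contribution of that involution from the twisting of $\Sigma$. I expect the torus and $\r\p^2$ cases to follow quickly from torsion-freeness and the commensurability of $\z^2$ subgroups with $\Lambda_0$, so the genuine obstacle is verifying that the specific two-torsion and one-sidedness data of an embedded Klein bottle cannot be matched against the eigenvalue structure $\varepsilon_1 e^\lambda, \varepsilon_2 e^{-\lambda}$ of $A$ recorded in Lemma~\ref{lem.quotient}.
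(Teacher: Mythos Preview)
Your overall strategy — exploit torsion-freeness of $\Lambda$ and the projection $\Lambda \to \Lambda/\Lambda_0$ — is the same as the paper's, and your treatment of $\r\p^2$ is essentially the paper's (lift to the universal cover $\r^3$, which carries no nonorientable hypersurface; note however that your ``more directly'' sentence is not correct as stated, since an embedded $\r\p^2$ does not by itself force $\pi_1$-injectivity).

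There is a structural reduction you are missing which reshapes the remaining two cases. A one-sided torus, and a Klein bottle with nonorientable tubular neighborhood, can only sit inside a \emph{nonorientable} $L$: for an orientable ambient $3$-manifold an orientable surface is automatically two-sided, and any tubular neighborhood is orientable. By Theorem~\ref{thm.classif} the nonorientable closed $\sol$-manifolds are exactly the torus bundles with $\det A = -1$; sapphires are orientable. So your worry about the sapphire case in the Klein bottle argument is unnecessary, and you may work throughout with $L$ a torus bundle and $H_1(L;\z)/\tors \cong \z$.

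Contrary to your expectation, the Klein bottle is the \emph{easier} case once you phrase it in terms of $w_1$. Write $\pi_1(K)=\langle a,b \mid aba^{-1}b=1\rangle$. The composite $\pi_1(K)\to \Lambda \to H_1(L)/\tors \cong \z$ cannot vanish, since otherwise $K$ would lift to the orientable infinite cyclic cover of $L$ and hence have orientable neighborhood. It therefore factors injectively through $H_1(K)/\tors \cong \z$, so $b\mapsto 0$. But $w_1(L)$ is pulled back from the base circle (the fiber torus is orientable and the monodromy reverses orientation), so $i^*w_1(L)(a)=1$ and $i^*w_1(L)(b)=0$, i.e.\ $i^*w_1(L)=w_1(K)$. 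That is exactly the two-sidedness condition $w_1(\nu_K)=i^*w_1(L)+w_1(K)=0$, contradicting one-sidedness. Your proposed route via $2$-torsion in $\Lambda_0/(A-I)\Lambda_0$ does not obviously reach this conclusion.

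For the one-sided torus your self-intersection heuristic is shakier than it looks: one-sidedness is $w_1(\nu_T)\neq 0$ in $H^1(T;\z/2\z)$, and its pushforward to $H_1(L;\z/2\z)$ can vanish, so the mod-$2$ self-intersection test in $L$ is not decisive by itself. The paper instead argues directly: with $\pi_1(T)=\langle a,b\rangle$, the image in $H_1(L)/\tors$ is $N\z$ with $N$ odd (even $N$ would again give a lift to an orientable cover), and $j_*(b)=A^N j_*(b)$ forces $j_*(b)=0$. Then one passes to the plane-bundle cover $\widehat L$ associated to $\operatorname{Im} j_*'$, takes a curve $s$ on the boundary of a tubular neighborhood of $a$, and checks by a mod-$2$ intersection count that $b$ is nontrivial in $\pi_1(\widehat L\setminus s)$; but this free group admits no nontrivial element fixed by the hyperbolic automorphism $A$, a contradiction. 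Your commensurability observation (any $\z^2$ in $\Lambda$ meets $\Lambda_0$) is correct and is implicitly what makes $j_*(b)=0$, but the finish requires this covering-and-intersection step rather than a homology self-intersection.
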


\begin{proof}
This lemma follows from Koll\'ar-Kapovich \cite[Theorem~12.2]{koII}. Here follows a direct proof. From Theorem~\ref{thm.classif}, nonorientable $\sol$ manifolds are nonorientable torus bundles.
\begin{enumerate}
\item 
Any $\r\p^2$ in $L$ would lift to a $\r\p^2$ in the universal covering $\r^3$ of $L$ since $\pi_1(L)$ has no order two element. And $\r^3$ contains no nonorientable hypersurface (compare \cite[Lemma~12.3]{koII}). 

\item
Let $i \colon K \hookrightarrow L$ be a one-sided Klein bottle with nonorientable neighborhood. The induced morphism $\pi_1(K) \stackrel{i_*'}{\hookrightarrow} H_1(L)/\tors \cong \z$ factorizes through an injective morphism $H_1(K)/\tors \cong \z$, since otherwise it would vanish and $K$ would lift to the infinite cyclic covering of $L$ and then would have an orientable neighborhood. Let us write $\pi_1(K) = \langle a,b \st aba^{-1}b=1 \rangle$, where $a$ generates $H_1(K)/\tors$ and $b^2$ generates $[\pi_1(K),\pi_1(K)]$. Then $i_*'(b^2)=0$ so that $i_*'(b)=0$. We deduce that $i^*w_1(L)(a) = 1$ and $i^*w_1(L)(b) = 0$ so that $i^*w_1(L) = w_1(K)$, which contradicts the fact that $K$ should be one-sided. 

\item
Let $j \colon T \hookrightarrow L$ be a one-sided torus. The image of the induced morphism $j_*' \colon \pi_1(T) \hookrightarrow H_1(L)/\tors \cong \z$ is a subgroup $N\z$ with $N$ odd, since as before $T$ would lift to a torus in the $N^{th}$ cyclic covering of $L$ which would be orientable. Let us write $\pi_1(T) = \langle a,b \st aba^{-1}b^{-1}=1 \rangle$, where $j_*'(a)$ generates $\mathrm{Im} j_*'$ and $b$ generates $\ker j_*'$. Then $j_*(b) = j_* (aba^{-1})=A^N(j_*(b))$ where $A$ is the hyperbolic monodromy map of the bundle, so that $j_*(b) = 0$ where $j_* \colon \pi_1(T) \to \pi_1(L)$. Hence $T$ lifts to the covering $\widehat L$ of $L$ associated to $\mathrm{Im} j_*'$, this  is the plane bundle with monodromy $A$. Let $s$ be the boundary of a neighborhood of the zero section of the normal bundle of $T$ restricted to $a$. Then $s$ is disjoint from $T$ and $0\ne b \in \pi_1(\widehat L\setminus s)$. Indeed, if $D$ is a disc of $\widehat L$ with boundary $b$, then $D\cdot s = 2 \mathring{D}\cdot a + a \cdot b \equiv 1 \mod 2$. Now  $\pi_1(\widehat L\setminus s) = \langle t_1,t_2,a \st at_1a^{-1} = t_2^{-1}, at_2a^{-1} = t_1^{-1}\rangle$ where $\langle t_1,t_2\rangle$ generate the free fundamental group of the fiber $\widehat L \to S^1$. The element $b$ is a word in $t_1,t_2$ which for the same reason as before satisfies $A(b) = b$, this is impossible.
\end{enumerate}

\end{proof}
\end{enumerate}

\bibliography{solmanifolds}
\bibliographystyle{abbrv}

\noindent
Universit\'e de Savoie ; Laboratoire de math\'ematiques (LAMA)

\medskip

\noindent
Universit\'e de Lyon ; CNRS ;
Universit\'e Lyon~1 ; Institut Camille Jordan


\end{document}